\def\z{{\mathbb Z}}
\def\N{{\mathbb N}}
\def\n{{\mathbb N}}
\def\Zm{{-\mathbb{N}}}
\def\P{{\mathbb P}}
\def\E{{E}}
\def\1{{\mathbb 1}}
\def\F{{\mathcal{F}}}
\def\G{{\mathcal{G}}}
\newtheorem{theoreme}{Theorem}
\newtheorem{conjecture}{Conjecture}[section]
\newtheorem{lemme}[conjecture]{Lemma}
\newtheorem{proposition}[conjecture]{Proposition}
\newtheorem{remarque}{Remark}
\newtheorem{exemple}{Example}
\newtheorem{definition}{Definition}
\newtheorem{notation}{Notation}
\def\@yproof[#1]{\@proof{ #1}}
\def\@proof#1{\begin{trivlist}\item[]{\em Proof#1.}}
\newenvironment{proof}{\@ifnextchar[{\@yproof}{\@proof{}
}}{~\end{trivlist}} \makeatother
\title{The filtration of the split-words process}
\author{Ga\"el Ceillier}
\renewcommand{\le}{\leqslant}
\renewcommand{\ge}{\geqslant}
\begin{document}

\maketitle


Soon to be published in Probability
Theory and Related Fields.



\begin{abstract}
M. Smorodinsky and S. Laurent have initiated the study of the filtrations of
split-word processes, in the framework of discrete negative time. For these filtrations,
we show that Laurent's sufficient condition for non standardness is also necessary,
thus yielding a practical standardness criterion. In turn, this criterion enables us to
exhibit a non standard filtration which becomes standard when time is accelerated by
omitting infinitely many instants of time.
\end{abstract}

\section{Introduction}
We shall be interested in filtrations, in the setting of discrete, negative time: given
a probability space $(\Omega,\mathcal{A}, \P)$, a filtration is an increasing family $\F = (\F_n)_{n\le 0}$ of
sub-$\sigma$-fields of $\mathcal{A}$; observe that the time $n$ ranges over all negative integers. (Equivalently,
one could consider decreasing families of $\sigma$-fields indexed by positive integers, known
as reverse filtrations; but we find it more convenient to let time run forward, with
$n+1$ posterior to $n$, at the mild cost of dealing with negative instants.) As discovered
by Vershik~\cite{vershik1995tds}, in this framework very subtle phenomena occur in the vicinity
of time $-\infty$.

For a simple example, suppose that the $\sigma$-field
$\cap_n\F_n$ is degenerate and that, for
each n, $\F_n$ is generated by $\F_{n-1}$ and by some Bernoulli random variable $U_n$ which is independent
of $\F_{n-1}$ and uniformly distributed on the $2$-set $\{0, 1\}$. Under these hypotheses, it
may happen that $\F$ contains more information than the natural filtration of the
Bernoulli process $ U = (U_n)_{n\le 0}$ (this is similar to weak solutions in SDEs); but
something more surprising is also possible: that $\F$ is not generated by any Bernoulli
process whatsoever. Such filtrations have been called non standard by Vershik, who
has given in~\cite{vershik1995tds} a necessary and sufficient criterion for standardness, and several
examples of non standard filtrations.
The rigorous definition of a standard filtration will be recalled later, in section~\ref{S3-split}.

All filtrations considered in this study have an additional property: for each n,
$\F_n$ is generated by $\F_{n-1}$ and by some random variable $U_n$ which is independent from $\F_{n-1}$ and
uniformly distributed on some finite set with $r_n$ elements. Such a filtration is called
$(r_n)$-adic. For these filtrations, as shown by Vershik~\cite{vershik1995tds}, standardness turns out to
be tantamount to a simpler, much more intuitive property: an $(r_n)$-adic filtration
$\F$ is standard if and only if $\F$ is of product type, that is, $\F$ is the natural filtration
of some process $V = (V_n)_{n\le 0}$ where the $V_n$ are independent random variables (in this case, it is
easy to see that the process $V$ can be chosen with the same law as $U$.) So, at first
reading, `standard' can be replaced with `of product type' in this introduction.

When time is accelerated by extracting a subsequence, that is, when ($\F_n)_{n\in\z^-}$
is replaced with ($\F_n)_{n\in Q}$ where $Q$ is some infinite subset of the time-axis $\z^-$, a
standard filtration always remains standard, but a non standard one may become
standard (or not). Examples of this phenomenon were first studied by Vershik in
the framework of ergodic theory, and then, in a probabilistic setting, by Laurent~\cite{laurent2004ftd}.
Lacunary isomorphism theorem~\cite{emery2001vershik} states that, from any filtration $(\F_n)_{n\in\z^-}$
(such that $\F_0$ is essentially separable), there exists $Q \subset \z^-$ such that ($\F_n)_{n\in Q}$ is standard.

By varying the parameters in an example initially due to Vershik~\cite{vershik1995tds} and
later modified (in the dyadic case) by M. Smorodinsky~\cite{smorodinsky1998pns}, Laurent~\cite{laurent2004ftd} has
described a family of filtrations, the split-word filtrations; he has shown some of
them (the fastest ones) to be standard, and some other ones (the slowest ones) to
be non standard; but an intermediate class was left undecided. We continue his
study, and show that all these intermediate filtrations are in fact standard. This
yields an easily verifiable necessary and sufficient condition for a split-word filtration
to be standard.

As the family of split-word filtrations is stable by extracting subsequences, this
criterion makes it simple to observe on these examples the transition from nonstandardness
to standardness when time is accelerated. We find that this transition
is, in some sense, sharp: in Example~\ref{example_non-standard}, we exhibit a non standard filtration $\F$ such
that, for every infinite subset $Q$ of $\z^-$ with infinite complementary, the corresponding
extracted filtration ($\F_n)_{n\in Q}$ is standard. This $\F$ is as close to being standard as
possible, for, if $Q \subset \z^-$ is cofinite and if $\G$ is any filtration, the extracted filtration
$(\G_n)_{n\in Q}$ clearly has the same asymptotic properties (standardness, product type,
etc.) as $\G$. To our knowledge, in the earlier literature, the best result in this direction
was the existence of a non standard filtration $\F$ such that $(\F_{2n})_{n\le 0}$ is standard
(examples are given by Vershik~\cite{vershik1995tds}, Gorbulsky~\cite{gorbulsky1999one} and Tsirelson in an unpublished paper).

In this paper we study the filtrations of split-word processes. These processes are inspired by examples given by Vershik~\cite[example~2,3,4]{vershik1995tds} and have been introduced and studied in terms of probability theory by Smorodinsky~\cite{smorodinsky1998pns} in the dyadic case and by Laurent~\cite{laurent2004ftd} in the general case. 

The distribution of a split-word process depends on an alphabet $A$ of size $N \ge 2$ and on a sequence of positive integers  $(\ell_{n})_{n \le 0}$ such that $\ell_0=1$ and, for every $n\le0$, the ratio $r_n=\ell_{n-1}/\ell_{n}$ is an integer $r_n\ge2$.
The sequence $(X_n)_{n\le0}$ of split words is indexed by the nonpositive integers. For every $n \le 0$, the law of $X_n$ is uniform on the set of words of length $\ell_n$ on $A$. Moreover, if one splits the word $X_{n-1}$, whose length is $\ell_{n-1}$, into $r_n$ subwords of length $\ell_{n}$, then the word $X_n$ is chosen uniformly among these subwords, independently of everything up to time $n-1$. More precisely, denote by $V_n$ the location of the subword $X_{n}$ in $X_{n-1}$. Then $V_n$ is uniform in $\{1,2,\ldots,r_n\}$. The split-word process is $(X_n,V_n)_{n\le0}$.

Call $\F^{X,V}=(\F^{X,V}_n)_{n\le 0}$ the natural filtration of $(X_n,V_n)_{n \le 0}$. Clearly, every subsequence $(\F^{X,V}_n)_{n\in Q}$ with $Q \subset \z^-$ is the natural filtration of a split-word process with lengths $(\ell_n/\ell_m)_{n \in Q}$ on the alphabet $A^{\ell_m}$, where $m=\max Q$.

The filtration $\F^{X,V}$ is $(r_n)$-adic since for every $n \le 0$, $$\F^{X,V}_n=\F^{X,V}_{n-1}\vee \sigma(V_n)\text{ with } V_n \text{ independent of } \F^{X,V}_{n-1}.$$
Moreover, the tail $\sigma$-field $\F^{X,V}_{-\infty}$ is trivial, thanks to proposition 6.2.1 in~\cite{laurent2004ftd}. Yet, the inclusion $\F^V_n \subset \F^{X,V}_n$ is clearly
strict since $X_0$ is independent of $(V_n)_{n\le 0}$.
However, the filtration $\F^{X,V}$ may still be a product type filtration (generated by some other independent sequence).

\subsection{Results}

Surprisingly, the nature of the filtration $\F^{X,V}$ depends on the sequence $(\ell_n)_{n \le0}$:

\begin{theoreme}\label{theoA}
The filtration $\F^{X,V}$ is standard (or equivalently, is of product type) if and only if, the series
$\displaystyle\sum_{n}\frac{ \ln(r_{n})}{ \ell_{n}}$
diverges.
\end{theoreme}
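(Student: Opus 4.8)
The proof naturally splits into the two implications, and the heart of the matter is the standardness direction, which is where Laurent's work left the gap.

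\textbf{The non-standardness direction (only if).} This is essentially due to Laurent, and the plan is to invoke his sufficient condition for non-standardness. Concretely, one shows the contrapositive: if $\sum_n \ln(r_n)/\ell_n < \infty$, then $\F^{X,V}$ is not of product type. The standard tool here is Vershik's second-level (or $I$-cosiness / self-joining) criterion; one exhibits a sequence of coupling-deficient configurations. The intuition is that knowing $X_n$ pins down a length-$\ell_n$ window of the original word, and the ``cost'' of forgetting which window (the entropy $\ln r_n$ injected at step $n$) is small compared to the ``size'' $\ell_n$ of the information it must be spread over; summability of the ratios prevents two independent copies of the process, started from the far past, from being coupled. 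I would simply cite this part of Laurent~\cite{laurent2004ftd} and perhaps reprove it quickly via the parameterization criterion.

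\textbf{The standardness direction (if), the main content.} Here one assumes $\sum_n \ln(r_n)/\ell_n = \infty$ and must prove $\F^{X,V}$ is of product type. By Vershik's theorem for $(r_n)$-adic filtrations (quoted in the introduction), it suffices to verify the standardness criterion — I would use Vershik's ``$I$-cosiness'' criterion, or equivalently Émery--Schachermayer's criterion of cosiness: for every real random variable $Z \in L^1(\F_0)$ and every $\varepsilon>0$, one must produce, for $n$ sufficiently negative, two copies $(X', V')$ and $(X'', V'')$ of the split-word process that agree from some negative time $n$ onward, are independent before a slightly earlier time, and whose $\F_0$-measurable functionals $Z'$ and $Z''$ are within $\varepsilon$ in $L^1$. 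The construction: take the two copies to share the same word $X_n$ at time $n$ (this is the ``couple at time $n$'' part), run them with independent splitting choices $V'_k, V''_k$ for $k > n$, and control how fast $X'_0$ and $X''_0$ — really the joint law of the pair of final words — approach the diagonal as $n \to -\infty$. The quantitative engine is a combinatorial/entropy estimate: conditionally on $X_n$, the word $X_0'$ is a uniformly random length-$1$ letter obtained by iterated uniform sub-sampling, and one must show that if $\sum \ln(r_k)/\ell_k$ diverges then two independent such sub-samplings from the \emph{same} $X_n$ can be coupled (by a clever choice of the joint law of $(V'_k,V''_k)_{k>n}$, not necessarily independent across the two copies) to coincide with probability tending to $1$. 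The divergence of the series is exactly what makes the relevant coupling probabilities telescope to $1$; the key inequality will relate the per-step decoupling to $\ell_k$ and the per-step entropy gain to $\ln r_k$, so that $\prod_k(1 - c\ln(r_k)/\ell_k) \to 0$.

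\textbf{Main obstacle.} The delicate point is the coupling construction in the standardness direction: one cannot simply use independent splittings in the two copies, because then the final letters would be essentially independent and never couple. Instead one needs a \emph{monotone} or \emph{maximal} coupling of the two sub-sampling trajectories given a common $X_n$, and then check that the resulting joint process is still a legitimate pair of split-word processes with the correct marginals and the required conditional-independence structure for the cosiness criterion — in particular that the coupling can be realized while keeping, for each copy separately, $V_k$ uniform and independent of the past. Making this simultaneously a valid coupling and compatible with the filtration structure, and then extracting the clean $\varepsilon$-bound on $\|Z'-Z''\|_1$ from the letter-coincidence probability via a density/approximation argument (reducing a general $Z\in L^1(\F_0)$ to a bounded function of $X_0$ and finitely many $V_k$'s), is the part that requires real care; the entropy estimate itself, once set up, is a routine consequence of the hypothesis on the series.
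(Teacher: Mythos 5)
Your `only if' direction matches the paper: it is Laurent's negation of I-cosiness, which the paper reproduces via a semi-metric $e_n$ invariant under the automorphism group $\mathrm{Aut}_n$, a submartingale inequality $\P[X'_{n_0}\ne X''_{n_0}]\ge \E[e_n(X'_n,X''_n)]$, and a Hoeffding large-deviation bound. You may cite this piece.

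For the `if' direction there is a genuine gap, and the route you sketch does not match the paper's. First, your I-cosiness setup is structurally inconsistent: the criterion requires the two copies to be \emph{independent} before some time $n_0$, yet you propose to have them ``share the same word $X_n$'' and also to ``agree from some negative time $n$ onward.'' Two genuine copies of the split-word process cannot simultaneously be independent at time $m<n$ and agree at time $n>m$: the words $X'_n$, $X''_n$ would then be independent uniform words of length $\ell_n$, hence distinct with overwhelming probability. Second, even if the independence structure were corrected (start independent, then try to couple), you give no quantitative argument; the assertion that divergence of $\sum\ln(r_k)/\ell_k$ makes ``coupling probabilities telescope to $1$'' is exactly the content to be proved, and no mechanism for coupling the hierarchical sub-samplings is exhibited. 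The paper does not verify I-cosiness at all. Instead it proves product type \emph{constructively}: it builds a new sequence of innovations $V'_n$ by applying a \emph{partial canonical coupling} $\varphi^{\lambda/\ell}_w$ that reorders the $r_n$ sub-words of $X_{n-1}$ so that their first $\alpha\ell_n$ letters approximate a fixed canonical word. This is done only along a sparse subsequence $\phi(2n)$ chosen (Proposition~\ref{p21}) so that $r_{\phi(2n)}\gg N^{2\alpha_{2n}\ell_{\phi(2n)}}$ while $\sum\alpha_{2n}=\infty$. Lemma~\ref{lem13} then controls the probability that the canonical coupling succeeds, Borel--Cantelli (via Lemma~\ref{lem2.1}) guarantees success infinitely often together with the event that $X_{\phi(2n+1)}$ falls in the recovered prefix, and these two facts show every $X_m$ is a function of $(V'_k)_{k\le m}$. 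That constructive, partial-recovery idea is the crux, and it is absent from your proposal.
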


The `if' part of the theorem, which is new, will be proved in section~\ref{S2-split}, and the `only if' part in section~\ref{S3-split}. \\
Note that the convergence of the series $\displaystyle\sum_{n}\frac{ \ln(r_{n})}{ \ell_{n}}$ is equivalent to the condition $\Delta$ of Laurent~\cite{laurent2004ftd}, who established the `only if' part of theorem~\ref{theoA}.

The condition that we call $\neg\Delta$ (the divergence of the series $\displaystyle\sum_{n}\frac{ \ln(r_{n})}{ \ell_{n}}$), which implies standardness, improves on Laurent's sufficient condition of standardness:
\begin{itemize}
\item[($\nabla_N$)]
There exists $\alpha<1$ such that $r_{n}^{\alpha}\gg N^{\ell_{n}}$ as $n \to -
\infty$.
\end{itemize}


Laurent notices that conditions $\nabla_N$ are weaker than the condition
$$\frac{ \ln(r_n)}{ \ell_{n}} \to \infty\text{ as }n \to -\infty, \quad \quad \quad (\nabla)$$
which does not depend on $N$.
Laurent indicates that conditions $\nabla$ and $\Delta$ were previously introduced by Vershik~\cite[example~1]{vershik1995tds}, in the context of decreasing sequences of measurable partitions.

Since conditions $\nabla$ and $\Delta$ do not exhaust all possible situations, both Vershik and Laurent asked what happens ``between $\nabla$ and $\Delta$''. In theorem~\ref{theoA}, we solve Laurent's question: condition $\Delta$ is in fact necessary and sufficient for the filtration of the split-word process to be non standard.

After the completion of this paper, Anatoly Vershik drew our attention to the paper \cite{heicklen}, where Heicklen obtains a result equivalent to Theorem~\ref{theoA}. The $(r_n)$-adic filtrations studied by Heicklen were introduced by Vershik \cite{vershik1970} as follows. (We mention that both Vershik and Heicklen index sequences by the set of nonnegative integers whereas we index them by the set of nonpositive integers, and that this is the only difference between their presentation and ours, given below.)

Let $A$ be a finite alphabet and $(G_n)_{n \le 0}$ the decreasing sequence of groups defined by
$$
G_n = \sum_{k=n+1}^0 \z/r_k\z.
$$
Let $G$ denote the union over $n \le 0$ of the groups $G_n$. For every $n \le 0$, the group $G_n$ acts on $A^G$ (on the left) by canonical shifts. Namely, for every $g \in G_n$ and $f \in A^G$, one defines $g \cdot f\in A^G$ by $(g \cdot f) (x) = f(xg)$  for every $x \in G$. Let ${\rm Orb}_{G_n}(f)$ denote the orbit of a given $f\in A^G$ under the action of $G_n$, that is, ${\rm Orb}_{G_n}(f)=\{g \cdot f\,;\,g\in G_n\}$.

Let $F  = (F(x))_{x \in G} \in  A^G$ be a random function whose coordinates $F(x)$ are independent and uniformly distributed in $A$. The filtration $({\mathcal F}_n)_{n \le 0}$ studied by Vershik and Heicklen is the natural filtration of $(\mathrm{O}_n)_{n \le 0}$, where
$$
\mathrm{O}_n={\rm Orb}_{G_n}(F).
$$
For each $n \le 0$, $\mathrm{O}_{n-1}$ is the union of $r_n$ orbits under the action of $G_n$, one orbit for each element of $G_{n-1}/G_n$. Almost surely, these orbits are all different since the shifted functions $g \cdot F$ are different. Futhermore, conditionally on $(\mathrm{O}_{n-1},\mathrm{O}_{n-2},...)$, the random variable $\mathrm{O}_n$ is uniformly distributed on these $r_n$ orbits. This shows that $({\mathcal F}_n)_{n \le 0}$ is an $(r_n)$-adic filtration.

One can show that the tail $\sigma$-field ${\mathcal F}_{-\infty}$ is trivial and that the filtration of the split-words process is immersed in $({\mathcal F}_n)_{n \le 0}$. Informally, the word $X_n$ at time $n$ is given by the values at $e$ (the identity of the group $G$) of the elements of $\mathrm{O}_n$. One gets $X_n$ from $X_{n-1}$ by splitting the orbits under $G_{n-1}$ into $r_n$ orbits under $G_n$ and by choosing one of these orbits uniformly randomly.

Thus, the standardness of $({\mathcal F}_n)_{n \le 0}$ when condition $\Delta$ fails implies that the natural filtration of  the split-words process is standard and, therefore, that it is of product type. Heicklen's proof relies on Vershik's standardness criterion and uses the language of ergodic theory. Although Heicklen's result and our Theorem~\ref{theoA} are logically equivalent, we believe that our proof is interesting because it relies on a constructive, direct and probabilistic method.


This result has interesting applications in ergodic theory as we now explain. Recall that entropy is a well known invariant associated to an automorphism of a probability space (that is, a bimeasurable application preserving the measure).
Vershik~\cite{vershik1973four} defined a much more elaborate invariant, named the scale. But computing this invariant is a very difficult task, even in simple cases. However, Laurent showed that our result provides the exact scale of a dyadic transformation~\cite{laurent2009vershikian}: more precisely, the scale of this transformation is the set of sequences $(r_n)_{n \le 0}$ fulfilling condition $\Delta$.

Using Vershik's theory, one can deduce from theorem~\ref{theoA}
that the filtration of the split-word process on any separable alphabet (endowed with an arbitrary measure) is standard under condition $\neg\Delta$, see \cite{laurent2009pre} for a proof.


\subsection{Examples}
\label{S2.1-split}

Condition $\neg\Delta$ forces the length $\ell_n$ to grow very quickly as $n$ goes to $-\infty$. If a given sequence $(\ell_n)_{n\le0}$ is $\neg\Delta$, then every sequence $(\ell'_n)_{n\le0}$ such that $(\ln r'_n)/\ell'_n \ge (\ln r_n)/\ell_n$ is $\neg\Delta$ as well. No similar property holds for the sequence $(\ell_n)_{n\le0}$ only, nor for the sequence $(r_n)_{n\le0}$ only. Indeed, the first example of this section provides a sequence $(r_n)_{n}$ which is $\neg\Delta$ and such that $(r^2_n)_{n\le0}$ is $\Delta$.

\begin{exemple}[A standard split-word filtration]
Let $\ell_{0}=1$ and $\ell_{n-1}=2^{\ell_{n}}$ for every $n \le 0$. That is to say $$\ell_{n}=2^{2^{2^{.^{.^{.^{2}}}}}} \text{ where the figure } 2 \text{ appears } |n|  \text{ times.}$$
Then for every $n<0$, $$r_{n}=\ell_{n-1}/\ell_{n}=2^{\ell_{n}-\ell_{n+1}}.$$
Therefore $$\log_2(r_{n})/\ell_{n} = \frac{\ell_{n}-\ell_{n+1}}{\ell_{n}} \to 1,$$
which proves that $(r_n)$ is $\neg\Delta$.
\end{exemple}

Theorem~\ref{theoA} has another interesting consequence which we now explain. Recall that, by the lacunary isomorphism theorem~\cite{emery2001vershik}, from any filtration $(\F_n)_{n \le 0}$ (such that $\F_0$ is essentially separable), one can extract a filtration $(\F_n)_{n \in Q}$ which is standard. In \cite{vershik1995tds}, Vershik provides an example where $(\F_n)_{n \le 0}$ is non standard whereas $(\F_{2n})_{n \le 0}$ is standard. In~\cite{gorbulsky1999one}, Gorbulsky also gives such an example. Theorem~\ref{theoA} provides an example of a non standard filtration (example~\ref{example_non-standard} below) in which the transition from the non standard case to the standard case is very sharp: $(\F_n)_{n \in Q}$ is standard for every infinite subset $Q$ of $\z^-$ with infinite complementary. 

\begin{exemple}[A non standard filtration close to standardness]\label{example_non-standard}
Set $\ell_{0}=1$ and $\ell_{n-1}=4^{\sqrt{\ell_{n}}}$ for every $n \le 0$. That is to say
$$\ell_n=4^{2^{2^{.^{.^{.^{2}}}}}} \text{ where the figure  } 2 \text{ appears } |n|-1 \text{ times.}$$
Then the filtration $(\F_n^{(X,V)})_{n \le 0}$ is not of product type.
Yet, if $\phi$ is a strictly increasing application from $\Zm$ into $\Zm$ such that $\phi(n)-n \to -\infty$ as $n \to - \infty$, then the filtration   $(\F^{(X,V)}_{\phi(n)})_{n\le 0}$ is of product type.
\end{exemple}

\begin{proof}
On the one hand
$$\frac{\log_2 r_n}{\ell_n}\le\frac{\log_2\ell_{n-1}}{\ell_n}=\frac{2}{\sqrt{\ell_n}},
$$
which is the general term of a convergent series, thus $(\ell_n)_{n\le 0}$ is $\Delta$.
On the other hand the filtration $(\F^{(X,V)}_{\phi(n)})_{n\le 0}$ is the filtration of a split-word process of length process $(\ell'_n)_{n \le 0}=(\ell_{\phi(n)})_{n \le 0}$. The ratios between successive lengths are, for $n\le0$,
$$
r'_n=\ell'_{n-1}/\ell'_n=\ell_{\phi(n-1)}/\ell_{\phi(n)}.
$$
If $\phi(n)-n \to -\infty$ when $n \to - \infty$, then $\phi(n-1) \le \phi(n)-2$ infinitely often. For these $n$,
$$r'_n \ge \frac{\ell_{\phi(n)-2}}{\ell_{\phi(n)}}=\frac{4^{\sqrt{\ell_{\phi(n)-1}}}}{\ell_{\phi(n)}}=\frac{4^{2^{\sqrt{\ell_{\phi(n)}}}}}{\ell_{\phi(n)}}$$
hence
$$ \frac{\log_2 r'_n}{\ell'_n} \ge 2\frac{2^{\sqrt{\ell_{\phi(n)}}}}{\ell_{\phi(n)}}-\frac{\log_2 \ell_{\phi(n)}}{\ell_{\phi(n)}}.
$$
This shows that a subsequence of $(\log_2(r'_n)/\ell'_n)_n$ converges to infinity,
hence that $(\ell'_n)_n$ is $\neg\Delta$.
\hfill $\square$
\end{proof}


\section{Laurent's method and tools}
\label{S1-split}
In this section, we introduce the tools used by Laurent to prove that under condition $\nabla$, the filtration of the split-word process is of product type. Laurent used a canonical coupling to build explicitly a sequence of innovations $(V'_n)_{n \le0}$ which generates the process $(X_{n},V_n)_{n \le 0}$.

\begin{definition}
If $(\F_n)_{n \le 0}$ is a filtration, and $(U_n)_{n\le0}$ is a sequence of random variables such that for every $n \le 0$,
$$\F_n=\F_{n-1}\vee \sigma(U_n)\ \ \text{ with } U_n \text{ independent of } \F_{n-1},$$
one says that $(U_n)_{n \le 0}$ is a sequence of innovations for $(\F_n)_{n\le0}$.
\end{definition}

This method is strengthened in section~\ref{S2-split}, where we consider a partial canonical coupling to improve on condition $\nabla$.

We remind the reader that sequences are indexed by the nonpositive integers.

\subsection{Change of innovations}
\label{S1.1-split}
We start with a complete definition of split-word processes.

\begin{definition}[Split-word process]
Let  $(r_{n})_{n \le 0}$ denote a sequence of integers such that $r_n\ge 2$ for every $n\le0$. Set
 $\ell_{0}=1$ and, for every $n \le 0$, $\ell_{n-1}=r_n \ell_n$.
Let $A$ denote a finite set, called the alphabet, with cardinal $N \ge
2$.

A split-word process is any process $(X_{n},V_{n})_{n \le 0}$  such that, for every $n \le 0$,
\begin{itemize}
\item $X_{n}$ is uniformly distributed on $A^{\ell_n}$,
\item $V_{n}$ is uniformly distributed on $\{1,\ldots,r_{n}\}$ and independent of the $\sigma$-algebra ${\cal F}^{(X,V)}_{n-1} =
  \sigma(X_{m},V_{m}\,;\,m \le n-1)$,
\item if the word $X_{n-1}$ (with length $l_{n-1}=l_n r_n$) is partitioned into $r_n$ subwords of length $l_n$, $X_n$ is the $V_n$-th among those $r_n$ subwords.
\end{itemize}
\end{definition}

The sequence $(X_{n})_{n \le 0}$ is a inhomogeneous Markov process indexed by the negative integers and generated by the
innovations $(V_{n})_{n \le 0}$. The existence of such a process $(X_{n},V_n)_{n \le 0}$ is guaranteed by Kolmogorov's theorem.

To prove that, under some conditions, the filtration of the split-word process is of product type, one has to switch from one set of innovations to another. Lemma~\ref{lem11} provides a general method to build new innovations.

\begin{lemme}[Change of innovations]
\label{lem11}
For every $n \le -1$, let $\{\varphi^{n}_{w}\}_{w}$ denote a family of permutations of $\{1,\ldots,r_{n+1}\}$, indexed by the elements $w$ of $A^{\ell_{n}}$, and let
$$
V'_{n+1}=\varphi^{n}_{X_{n}}(V_{n+1}).
$$
Then $(V'_{n})_{n \le 0}$ is a sequence of generating innovations for $(X_{n})_{n \le 0}$. This means that, for every negative integer $n \le -1$, the following properties hold:
\begin{itemize}
\item The random variable $V'_{n+1}$ is uniformly distributed on $\{1,\ldots,r_{n+1} \}$.
\item The random variable $V'_{n+1}$ is independent of $\F_{n}^{X,V}$ and therefore also of $\F_{n}^{X,V'}$.
\item The random variable $X_{n+1}$ is a measurable function of $X_{n}$ and $V'_{n+1}$.
\end{itemize}
\end{lemme}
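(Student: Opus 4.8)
The plan is to verify the three bullet points in turn, proceeding by a straightforward conditioning argument that exploits the fact that each $\varphi^n_{X_n}$ is a bijection applied after observing $X_n$, which is $\F^{X,V}_n$-measurable.

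First I would establish the uniform distribution of $V'_{n+1}$. Since $V_{n+1}$ is uniform on $\{1,\dots,r_{n+1}\}$ and independent of $\F^{X,V}_n$, and since $X_n$ is $\F^{X,V}_n$-measurable, one may condition on $\F^{X,V}_n$: given $\F^{X,V}_n$, the permutation $\varphi^n_{X_n}$ is a fixed (deterministic) bijection of $\{1,\dots,r_{n+1}\}$, while $V_{n+1}$ remains uniform on that set; hence $V'_{n+1}=\varphi^n_{X_n}(V_{n+1})$ is uniform on $\{1,\dots,r_{n+1}\}$ conditionally on $\F^{X,V}_n$, and therefore unconditionally.

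The same conditioning gives the independence statement: the conditional law of $V'_{n+1}$ given $\F^{X,V}_n$ is the uniform law, which does not depend on the conditioning $\sigma$-field; thus $V'_{n+1}$ is independent of $\F^{X,V}_n$. Since $X_n$ is $\F^{X,V}_n$-measurable, $\varphi^n_{X_n}$ is a permutation that can be inverted measurably in terms of $X_n$, so $V_{n+1}=(\varphi^n_{X_n})^{-1}(V'_{n+1})$ is a measurable function of $X_n$ and $V'_{n+1}$; consequently $\sigma(X_n,V'_{n+1})=\sigma(X_n,V_{n+1})$, which by induction (using $X_n$ itself being a function of $X_{n+1}$'s predecessors, or more directly by comparing the generated filtrations level by level) yields $\F^{X,V'}_n=\F^{X,V}_n$ for every $n$, so independence of $\F^{X,V}_n$ is the same as independence of $\F^{X,V'}_n$. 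Finally, the third bullet is immediate: by definition of the split-word process, $X_{n+1}$ is the $V_{n+1}$-th subword of $X_n$, and $V_{n+1}=(\varphi^n_{X_n})^{-1}(V'_{n+1})$ is a measurable function of $X_n$ and $V'_{n+1}$, so $X_{n+1}$ is a measurable function of $X_n$ and $V'_{n+1}$.

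The only point requiring a little care — and the one I would treat as the main obstacle — is the identification $\F^{X,V'}_n=\F^{X,V}_n$, i.e.\ that the two processes generate the same filtration. This is not quite formal: one must check that not only $X_n$ but the whole past $(X_m,V_m)_{m\le n}$ is recovered from $(X_m,V'_m)_{m\le n}$. The clean way is an induction on $n$ from $-\infty$ is awkward, so instead I would argue that for each fixed $n$, $\sigma(X_m,V'_m;\,m\le n)=\sigma(X_m,V_m;\,m\le n)$ because for each $m\le n$ the map $(X_{m-1},V_m)\mapsto(X_{m-1},V'_m)$ is a measurable bijection whose inverse is also measurable, and $X_{m-1}\in\F^{X,V}_{m-1}\subset\F^{X,V}_n$; adding these coordinates one at a time shows the two $\sigma$-fields coincide. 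Once this is in hand, all three asserted properties follow as above.
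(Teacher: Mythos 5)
Your proof is correct and follows essentially the same route as the paper: condition on $\F^{X,V}_n$, observe that $\varphi^n_{X_n}$ is then a fixed permutation mapping a conditionally uniform $V_{n+1}$ to a conditionally uniform (hence independent) $V'_{n+1}$, and invert it to express $X_{n+1}$ as a function of $(X_n,V'_{n+1})$. The one place you over-engineer is the identification $\F^{X,V'}_n=\F^{X,V}_n$: for the lemma's second bullet only the easy inclusion $\F^{X,V'}_n\subset\F^{X,V}_n$ is needed (each $V'_m$ is by definition a measurable function of $(X_{m-1},V_m)$), which is the direction the paper tacitly invokes with its ``and therefore also''.
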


\begin{proof}[of lemma~\ref{lem11}]
For every negative integer $n$ and every $v$ such that $1\le v\le r_{n+1}$,
a simple computation proves that
$$
\P[V'_{n+1}=v|\F_{n}^{X,V}]=\P[V_{n+1}=(\varphi^n_{X_n})^{-1}(v)|\F_{n}^{X,V}]
=
1/r_{n+1}.
$$
This shows the first two properties. The third property follows from the fact that $X_{n+1}$ is the $k$-th subword of $X_n$, where $k=(\varphi^n_{X_n})^{-1}(V'_n)$.
\hfill $\square$ \end{proof}

\subsection{Canonical word and coupling}\label{S1.2-split}
To build the innovations which generate the process $(X_{n})_{n\le 0}$, one can use, and improve on, Laurent's construction under the stronger condition $\nabla$. This uses the notions of canonical word and canonical coupling, which we recall below.

\begin{definition}[Canonical alphabets and canonical words]
For every integer $M\ge2$, the canonical alphabet on $M$ letters is $A_M=\{1,\ldots,M\}$.
Canonical words on $A_M$ are the words whose $i$-th letter is congruent to $i$ modulo $M$.
Hence, the letters of $A_M$ appear in order and are repeated periodically.
\end{definition}
Canonical words will usually be denoted by the letter $c$.
For example the canonical word of length $11$ on $A_3$ is $12312312312$.

\begin{notation}[General notations]
To simplify the definition of the canonical coupling, one identifies any ordered alphabet $B$ of size $M\ge2$ with $A_M$ according to the rank of each letter in the alphabet $B$.\\
The $i$-th letter of a word $w$ is denoted by $w(i)$.
Let $w=(w(i))_{1\le i\le r}$ denote a word of length $r$. For every $1\le i\le r$, $H(w,i)$ denotes the number of instances of the letter $w(i)$ among the $(i-1)$ first letters of $w$:
$$H(w,i)=\sum_{1 \le j < i} \mathbf{1}_{\{w(j)=w(i)\}}.$$
\end{notation}

\begin{definition}[Canonical coupling]
Let $w$ denote a word of length $r$ on an ordered alphabet $B$ of size $M\ge2$.
The canonical coupling associated to $w$ is the permutation $\varphi_{w}$ of $\{1,\cdots,r\}$ defined as follows: for every $i \le r$,
$$
\varphi_{w}(i) = w(i) + H(w,i)M
\quad\mbox{if}\quad
w(i) + H(w,i)M\le r.
$$
After this process has been applied to every $i$, one chooses $\varphi_{w}(j)$ for the integers $j$ such that $w(j) + H(w,j)M>r$, in an increasing way and in order to make $\varphi_{w}$ a bijection. (So  $\varphi_{w}(j)$ is the smallest $k$ which does not belong yet to the range of   $\varphi_{w}$.)
\end{definition}

Later on, we apply the notions of canonical word and canonical coupling to some alphabets $A^\ell$ with $\ell\ge1$.

\begin{figure}[h]
\begin{center}
\includegraphics[width=.75\textwidth]{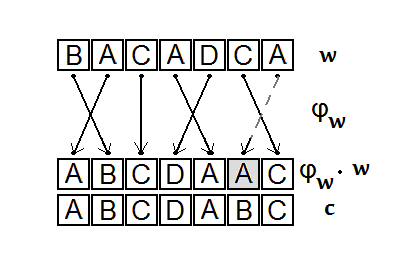}
\caption{Example of canonical coupling}
\end{center}
\label{exemple de couplage}
\end{figure}


By construction $\varphi_{w}$ is one of the permutations $\varphi$ such that $\varphi \cdot w := w \circ \varphi^{-1}$ is as close as possible to a canonical word. Lemma~\ref{lem12} makes this statement more precise.

\begin{lemme}[Comparison of $w$ and $c \circ \varphi_{w}$]
\label{lem12}
Let $c$ be the canonical word of length $r \ge 1$ on an ordered alphabet $B$ of size $M\ge2$. Then for every $1\le i\le r$ and $w \in B^r$,
$$
c(\varphi_{w}(i))=w(i)
\qquad\mbox{if and only if}\qquad
w(i) + H(w,i)M \le r. $$
\end{lemme}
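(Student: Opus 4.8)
The plan is to unwind the definitions of the canonical word $c$ and the canonical coupling $\varphi_w$ and verify the equivalence directly by tracking where each index is sent. Recall that $c$ on the ordered alphabet $B\simeq A_M=\{1,\dots,M\}$ is the word with $c(k)\equiv k\pmod M$, so $c(k)$ is determined by the residue of $k$ mod $M$; concretely $c(k)$ is the unique element of $\{1,\dots,M\}$ congruent to $k$. For a position $i\le r$, set $k_i := w(i)+H(w,i)M$. The key arithmetic observation is that $k_i\equiv w(i)\pmod M$ (since $w(i)\in\{1,\dots,M\}$ and $H(w,i)M$ is a multiple of $M$), and that $1\le w(i)\le M$, so $w(i)$ is precisely the representative of $k_i$ modulo $M$ in $\{1,\dots,M\}$. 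Hence whenever $k_i\le r$, the coupling sets $\varphi_w(i)=k_i$ and therefore $c(\varphi_w(i))=c(k_i)=w(i)$, giving the ``if'' direction immediately.

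For the converse, I would argue by contraposition: suppose $k_i=w(i)+H(w,i)M>r$, i.e. $i$ is one of the ``overflow'' indices $j$ for which $\varphi_w(j)$ is assigned in the mopping-up phase. The goal is to show $c(\varphi_w(i))\ne w(i)$ for all such $i$. The natural way is to show that every index $k\le r$ with $c(k)=w(i)$ has already been used as a value $\varphi_w(i')$ for some non-overflow index $i'$ in the first phase, so that an overflow index can never be sent to such a $k$. Concretely, the positions $k\le r$ with $c(k)=w(i)=:a$ are exactly $k=a, a+M, a+2M,\dots$ up to $r$, say there are $t$ of them, namely $a+hM$ for $h=0,1,\dots,t-1$. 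I would show these are precisely the images under $\varphi_w$ (first phase) of the first $t$ occurrences of the letter $a$ in $w$: if $i'$ is the $(h+1)$-st position in $w$ carrying the letter $a$, then $H(w,i')=h$ and $w(i')+H(w,i')M=a+hM\le r$, so indeed $\varphi_w(i')=a+hM$ is set in the first phase. Thus all of $\{k\le r: c(k)=a\}$ lies in the range of the first-phase assignment. Since $\varphi_w$ is a bijection and $i$ is an overflow index, $\varphi_w(i)$ is one of the remaining values, hence $c(\varphi_w(i))\ne a=w(i)$.

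The one point that needs a little care — and is the main (mild) obstacle — is the bookkeeping in the contrapositive direction: I must be sure that the overflow index $i$ itself is \emph{not} among those first-phase positions $i'$ that already claimed a $k$ with $c(k)=a$. This is immediate because $i$ is an overflow index precisely when $w(i)+H(w,i)M>r$, i.e. the $(H(w,i)+1)$-st occurrence of the letter $a$ already ``exceeds'' $r$; so the positions $i'$ carrying letter $a$ with $w(i')+H(w,i')M\le r$ are exactly the \emph{earlier} occurrences (those with $H(w,i')<H(w,i)$, plus possibly $i$ itself only if it did not overflow, which it does). One should also note that the count $t$ of indices $k\le r$ with $c(k)=a$ equals exactly the number of positions $i'\le i$ (indeed $i'< i$, or $\le$ depending on overflow) with $w(i')=a$ and $w(i')+H(w,i')M\le r$, so the bijection matches up perfectly and no $k$ with $c(k)=a$ is left over for an overflow index. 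Once this matching is spelled out, both implications follow and the lemma is proved.
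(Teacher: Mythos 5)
Your proof is correct and follows essentially the same route as the paper: the forward direction by direct substitution, and the converse by showing that the $t$ positions in $c$ carrying the letter $a=w(i)$ are exactly the first-phase images of the first $t$ occurrences of $a$ in $w$ (all of which precede an overflow index $i$), so bijectivity forces $\varphi_w(i)$ onto a position where $c$ has a different letter. Your quantity $t$ is the paper's $K_{w(i)}$, and the bookkeeping you spell out in the final paragraph is precisely the paper's observation that $H(w,i)\ge K_{w(i)}$.
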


\begin{proof}[of lemma~\ref{lem12}]
By definition of $c$, the number of instances of $j$ in $c$ is
$$
K_j=\max\{k\ge0 : j+(k-1)M\le r\}=\lfloor \frac{r-j}{M} \rfloor+1 =\lceil\frac{r+1-j}{M}\rceil.
$$
If $w(i) + H(w,i)M \le r$, then $\varphi_{w}(i)=w(i) + H(w,i)M$, thus $c(\varphi_{w}(i))=w(i) $ by definition of $c$.

Otherwise, $w(i) + H(w,i)M > r$, hence the number $H(w,i)$ of instances of $w(i)$ among the $i-1$ first letters of $w$ is at least the number $K_{w(i)}$ of instances of the letter $w(i)$ in $c$. According to the first case, $\varphi_w$ sends the ranks of the $K_{w(i)}$ first instances of $w(i)$ in $w$ on the ranks of the instances of $w(i)$ in $c$. Since $\varphi_w$ is bijective, the rank of every instance of the letter  $w(i)$ in $c$ has an antecedent by $\varphi_{w}$ that is less than $i$. Therefore the letter in $c$ with rank $\varphi_{w}(i)$ cannot be $w(i)$, that is, $c(\varphi_{w}(i)) \ne w(i).$
\hfill $\square$ \end{proof}

Lemma~\ref{lem12} implies lemma~\ref{lem13} below, which is a slight improvement on Laurent's lemma~6.3.2 of~\cite{laurent2004ftd}.
\begin{lemme}
\label{lemme technique 1}\label{lem13}
Let $X$ denote a uniform random word of length $r$ on an ordered alphabet $B$ of size $M\ge2$, $V$ a uniform random variable on  $\{1,\ldots,r\}$,  independent of $X$, and $c$ the canonical word of length $r$ on $B$. Then,
$$
\P[X(V) \ne c(\varphi_{X}(V))] \le M/r+2(M/r)^{1/3}.
$$
\end{lemme}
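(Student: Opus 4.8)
The plan is to bound the probability that $X(V) \ne c(\varphi_X(V))$ by separately controlling two sources of discrepancy, guided by Lemma~\ref{lem12}: the event in question is exactly $\{X(V) + H(X,V)M > r\}$. So I would estimate $\P[X(V) + H(X,V)M > r]$ and show it is at most $M/r + 2(M/r)^{1/3}$.

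First I would condition on $V = v$ and analyze $\P[X(v) + H(X,v)M > r]$, using that $X(1),\ldots,X(r)$ are i.i.d.\ uniform on $B$ (size $M$) and $V$ is independent of $X$. Note $H(X,v) = \sum_{1 \le j < v} \mathbf{1}_{\{X(j) = X(v)\}}$ is, conditionally on $X(v)$, a $\mathrm{Binomial}(v-1, 1/M)$ variable. The bad event needs roughly $H(X,v) \gtrsim r/M$, i.e.\ about $v/M$ worth of collisions when $v$ is of order $r$; this is typically not met, but one must handle two regimes. The clean way is to introduce a threshold parameter $t$ (to be optimized, landing near $t = (r/M)^{1/3}$ or a similar power): if $v - 1 < t \cdot r$ — equivalently $v$ is small — then $H(X,v) \le v-1 < tr$ deterministically, so the bad event forces $X(v) > r - trM$, which has probability at most $(trM)/r = tM$ over the choice of $X(v)$; integrating over the small values of $v$ (a fraction about $t$ of all values) contributes roughly $t \cdot tM + $ (the complementary fraction times the tail bound). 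For large $v$, I would use a concentration inequality for the Binomial — Markov's inequality on $H(X,v)$ gives $\P[H(X,v)M > r - X(v)] \le \mathbb{E}[H(X,v)]M/(r - M) \approx (v/M)\cdot M / r = v/r$, but this is too weak near $v = r$; instead I'd want a Chebyshev/second-moment or Bernstein-type bound, or simply bound $\P[X(v) + H(X,v)M > r]$ by $\P[H(X,v) \ge r/M - 1]$ plus $\P[X(v) \ge \text{something}]$ and use that $\mathbb{E}[H(X,v)] = (v-1)/M \le (r-1)/M < r/M$, so we are asking for a deviation and can apply Markov to $H(X,v) - \mathbb{E}[H(X,v)]$ won't work directly either.

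The cleanest robust route, and the one I would actually carry out: write $Y = X(V) + H(X,V)M$ and bound $\P[Y > r]$ via $\P[Y > r] \le \P[H(X,V) > r/M - M] + \P[X(V) > M^2]$-style splitting is awkward; better is Markov directly on $Y$ itself. We have $\mathbb{E}[X(V)] = (M+1)/2$ and, conditioning on $V$, $\mathbb{E}[H(X,V) \mid V] = (V-1)/M$, so $\mathbb{E}[H(X,V)] = (\mathbb{E}[V]-1)/M = (r-1)/(2M)$. Hence $\mathbb{E}[Y] = (M+1)/2 + (r-1)/2 = (r+M)/2$, and Markov gives only $\P[Y > r] \le (r+M)/(2r) \approx 1/2$ — useless. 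So Markov on $Y$ fails; I need the two-regime argument above, with the small-$v$ part handled deterministically (contributing $\sim tM + t^2 M$, dominated by the $M/r$ term once we also account that the ``small $v$'' fraction is tiny) and the large-$v$ part handled by a variance bound on $H(X,v)$: since $\mathrm{Var}(H(X,v) \mid X(v)) = (v-1)\frac{1}{M}(1 - \frac1M) \le v/M$, Chebyshev gives $\P[H(X,v) \ge r/M] \le \mathrm{Var}/(r/M - (v-1)/M)^2 \le (vM)/(r - v + 1)^2$, which is still bad as $v \to r$. The resolution is that when $v$ is within $O(\sqrt{r M})$ of $r$ — again a small fraction, of order $(M/r)^{1/2}$ — we just bound the probability by $1$, and this ``wasted'' mass is the source of the $(M/r)^{1/3}$ term after balancing the three regimes: $v$ small (prob.\ $\lesssim tM$), $v$ close to $r$ (measure fraction $\lesssim \sqrt{M/r}\cdot$ something, contributing after optimization a term $\sim (M/r)^{1/3}$), and $v$ in the bulk (Chebyshev gives something summable). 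Optimizing the threshold $t$ against these competing error terms is what produces the exponent $1/3$ and the constant $2$.

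The main obstacle is exactly this balancing: getting a large-deviation or Chebyshev estimate for the Binomial $H(X,v)$ that is strong enough near $v \approx r$ yet simple enough to sum over $v$, and then choosing the cutoffs so the three contributions combine into the stated $M/r + 2(M/r)^{1/3}$. I expect the bulk term and the small-$v$ term each contribute $O(M/r)$ or better, while the ``near the end'' term, where one is forced to give up and bound by $1$ over an interval of length $\sim (Mr^2)^{1/3}$, is what dictates the $(M/r)^{1/3}$ rate; the factor $2$ presumably absorbs a couple of these contributions simultaneously. A minor secondary point is that everything must be done with $V$ integrated against its uniform law on $\{1,\ldots,r\}$, so the ``fraction of values $v$'' language is literally a count divided by $r$, and one should be slightly careful that $r/M$ need not be an integer (handled by replacing $r/M$ with $\lceil r/M \rceil$ where needed, which changes constants negligibly).
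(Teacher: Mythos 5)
You have identified the right objects and the right tools -- the event is $\{X(V) + H(X,V)M > r\}$ by Lemma~\ref{lem12}, conditionally on $V=v$ the variable $H(X,v)$ is $\mathrm{Binomial}(v-1,1/M)$, and a Chebyshev estimate combined with giving up (bounding by $1$) on an interval of $v$'s near $r$, with an optimized threshold, should produce the $(M/r)^{1/3}$ rate. This is indeed the skeleton of the paper's argument. But your proposal stalls precisely where you say ``the main obstacle is exactly this balancing,'' and the concrete Chebyshev estimate you write down, $\P[H(X,v)\ge r/M]\le vM/(r-v+1)^2$, aims at a \emph{fixed} threshold $r/M$ and, as you yourself note, blows up as $v\to r$; you then have to integrate a $v$-dependent bound over a ``bulk'' regime plus handle two extra regimes, and you do not carry out any of that. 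So there is a genuine gap: the plan as written does not close.

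The missing device is simple and eliminates the need for three regimes and for summing a $v$-dependent bound. Introduce a \emph{fixed} deviation parameter $s>0$ and observe the elementary inclusion, valid on the whole space,
\[
\{X(V) + H(X,V)M > r\} \subset \{H(X,V)M > V-1+Ms\} \cup \{V-1+Ms > r-M\}.
\]
The first event asks for a deviation of $H(X,V)$ of size $>s$ above its conditional mean $(V-1)/M$, uniformly in $V$; Chebyshev with conditional variance $\le V/M$ then gives $\P[H(X,V)M>V-1+Ms]\le \E[V]/(Ms^2)\le r/(Ms^2)$ after integrating over $V$, with no dangerous behaviour near $v=r$ because the threshold moves with $V$. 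The second event only constrains $V$ to lie in an interval of length about $M(s+1)$ at the top of $\{1,\ldots,r\}$, so its probability is $\le M(s+1)/r$. Adding and taking $s=(r/M)^{2/3}$ yields exactly $M/r + 2(M/r)^{1/3}$. Also, your ``small $v$'' estimate $\P[X(v)>r-trM]\le tM$ is off ($X(v)$ takes values in $\{1,\ldots,M\}$, not $\{1,\ldots,r\}$), but with the decomposition above no separate small-$v$ regime is needed at all.
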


\begin{proof}[of lemma~\ref{lem13}]
Since $X(V)\le M$ and
$\{X(V)=c(\varphi_{X}(V))\}=\{X(V)+H(X,V)M \le r\}$,
$$
\{X(V)=c(\varphi_{X}(V))\}
\supset
\{H(X,V)M \le r-M\},
$$
hence, for every positive $s$,
$$
\{X(V)=c(\varphi_{X}(V))\}
\supset
\{H(X,V)M\le V-1+Ms\}\cap\{V-1+Ms\le r-M\}.
$$
Taking complements, one gets
$$
\{X(V)\ne c(\varphi_{X}(V))\}
\subset
\{H(X,V)M>V-1+Ms\}\cup\{V-1+Ms>r-M\},
$$
which yields
$$
\P[X(V)\ne c(\varphi_{X}(V))]
\le
\P[H(X,V)M>V-1+Ms]+\P[V-1+Ms>r-M].
$$
Since $r+1-V$ and $V$ are both uniform on   $\{1,\ldots,r\}$,
$$
\P[V-1+Ms>r-M]=\P[V<M(s+1)]\le M(s+1)/r.
$$
On the other hand,
$$
\P[H(X,V)M>V-1+Ms]\le \P\left[ \big| H(X,V)-(V-1)/M \big| >s\right].
$$
Since $V$ and $X$ are independent,
conditionally on $V$
 the distribution of $H(X,V)$ is binomial Bin$(V-1,1/M)$.
The conditional expectation of $H(X,V)$ is $(V-1)/M$ and the conditional variance of $H(X,V)$ is
$$
(V-1)(M-1)/M^2\le V/M,
$$
hence the
Bienaym\'e-Chebychev inequality yields
$$
\P[H(X,V)M>V-1+Ms]\le
\mathbb{E}[V]/(Ms^2)=(r+1)/(2Ms^2)\le r/(Ms^2).
$$
Finally,
$$
\P[X(V)\ne c(\varphi_{X}(V))]\le M(s+1)/r+r/(Ms^2).
$$
This upper bound for $s=(r/M)^{2/3}$ implies the statement of the lemma.
\hfill $\square$\end{proof}

\subsection{Adapting Laurent's proof}\label{S1.3-split}

Let us state a slight improvement on Laurent's result~\cite[proposition 6.3.3]{laurent2004ftd}:
\begin{theoreme}
\label{theoB}
If $r_{n}\gg N^{\ell_{n}}$ as $n \to -
\infty$, then the natural filtration of $(X_{n},V_n)_{n \le 0}$ is of product type.
\end{theoreme}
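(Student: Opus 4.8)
\textbf{Plan of proof of Theorem~\ref{theoB}.} The strategy is to iterate the change of innovations from Lemma~\ref{lem11}, choosing at each step the canonical coupling, and then to verify that the resulting innovations $(V'_n)_{n\le0}$ actually generate the whole filtration $\F^{X,V}$, not just the process $(X_n)_{n\le0}$. Concretely, for each $n\le-1$ I would apply Lemma~\ref{lem11} with the family of permutations $\varphi^n_w=\varphi_w$, the canonical coupling associated to the word $w\in A^{\ell_n}$ viewed as a word of length $r_{n+1}$ over the alphabet $A^{\ell_{n+1}}$ (of size $M=N^{\ell_{n+1}}$), so that $V'_{n+1}=\varphi_{X_n}(V_{n+1})$. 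By Lemma~\ref{lem11}, $(V'_n)_{n\le0}$ is a sequence of generating innovations for $(X_n)_{n\le0}$; the point of the canonical choice is that, by Lemma~\ref{lem12}, the word $c\circ\varphi_{X_n}$ agrees with $X_n$ except at the ``overflow'' positions, so knowing $V'_{n+1}$ together with the \emph{canonical} word $c_{n}$ of length $r_{n+1}$ already pins down $X_{n+1}$ with high probability.

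The core of the argument is then a Borel--Cantelli / backward-martingale estimate. Let $X_0$ be the target: I want to show $X_0$ is measurable with respect to $\F^{V'}_0=\bigvee_{n\le0}\sigma(V'_n)$. The idea, following Laurent, is that $X_0$ is recovered from $X_n$ by successively reading off $V'_{n+1},\ldots,V'_0$, and at each stage the ``canonical approximation'' to $X_k$ differs from the true $X_k$ only on the event $\{X_k(V_{k+1})\neq c_k(\varphi_{X_k}(V_{k+1}))\}$ studied in Lemma~\ref{lem13}. That lemma gives
$$
\P\big[X_k(V_{k+1})\neq c_k(\varphi_{X_k}(V_{k+1}))\big]\le \frac{M_k}{r_{k+1}}+2\Big(\frac{M_k}{r_{k+1}}\Big)^{1/3},
\qquad M_k=N^{\ell_{k+1}}.
$$
Under the hypothesis $r_n\gg N^{\ell_n}$, that is $N^{\ell_{k+1}}/r_{k+1}\to0$, these probabilities tend to $0$; after passing to a subsequence if necessary (or using that the filtration is unchanged under reindexing, as noted for subsequences in the introduction) one arranges them to be summable, so by Borel--Cantelli only finitely many of the ``bad'' events occur. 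Past the last bad index, the reconstruction of $X_0$ from the canonical words and the $V'_k$ is exact and deterministic; before it, there are only finitely many coordinates to correct, and each correction requires only finitely much extra information, which can be absorbed into a bounded modification of finitely many innovations without destroying their independence or uniformity (here one invokes Lemma~\ref{lem11} once more, on a finite horizon, with permutations chosen to encode the missing data). Hence $X_0\in\F^{V'}_0$, and since trivially $\F^{V'}_0\subset\F^{X,V}_0$ and each $V'_{n}$ is $\F^{X,V}_n$-measurable, one concludes $\F^{X,V}=\F^{V'}$, a filtration generated by an independent sequence, i.e.\ of product type.

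The main obstacle, I expect, is not the probabilistic estimate — Lemma~\ref{lem13} already does the heavy lifting — but the bookkeeping needed to turn ``$X_0$ is approximately reconstructible'' into ``$X_0$ is exactly measurable with respect to $\F^{V'}_0$''. One must be careful that correcting the finitely many early errors does not require information that is itself not available in $\F^{V'}_0$, and that the final modified sequence of innovations is still a genuine innovation sequence (uniform, and independent of the past) for the \emph{same} filtration $\F^{X,V}$ — this is where a clean application of Lemma~\ref{lem11} on a finite block, together with the triviality of the tail $\sigma$-field $\F^{X,V}_{-\infty}$, is needed. A secondary technical point is the identification of $\F^{X,V}$ with the natural filtration of $(X_n)_{n\le0}$ alone: since $V_n$ is a deterministic function of $(X_{n-1},X_n)$, the two coincide, so it suffices throughout to work with the word process, which is what makes the canonical-coupling machinery applicable.
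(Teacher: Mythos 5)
Your setup — new innovations via the canonical coupling, $V'_{n+1}=\varphi_{X_n}(V_{n+1})$, and the probability estimate from Lemma~\ref{lem13} — is exactly the paper's. The gap is in the final step, where you replace the paper's convergence–in–probability argument with a Borel--Cantelli one, and this does not work.

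First, Borel--Cantelli requires summability of the ``bad'' probabilities, but the hypothesis $r_n\gg N^{\ell_n}$ only gives $\P[X_k\ne X'_k]\to0$, not $\sum_k\P[X_k\ne X'_k]<\infty$. Your proposed fix of ``passing to a subsequence'' does not repair this: extracting a subsequence produces a genuinely different (coarser) filtration, and proving that the extracted filtration is of product type would not, by itself, prove it for $\F^{X,V}$. Second, even granting Borel--Cantelli, the correction scheme you describe is circular: the ``last bad index'' is a \emph{random} time, so you are not modifying a fixed finite block of innovations; and the information needed to ``correct'' an early $X_k$ is precisely $X_k$ itself, which is exactly what you have not yet shown to be $\F^{V'}_k$-measurable. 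The triviality of $\F^{X,V}_{-\infty}$ will not rescue this.

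The key structural point you are missing — and what makes the paper's proof so short — is that there is no accumulation of error, so Borel--Cantelli is not needed at all. The map $f'_n(x,v')=f_n(x,\varphi_x^{-1}(v'))$ satisfies $f'_n(X_{n-1},V'_n)=X_n$ \emph{exactly}. The only approximation occurs at a \emph{single} time $m$, where one replaces $X_m$ by $X'_m=f_m(c_{m-1},V'_m)$; once you are at the true $X_m$, the forward reconstruction through $f'_{m+1},\ldots,f'_n$ is error-free. Hence for each fixed $n$ and each $m\le n$, the random variable
$$
Y_{n,m}\;=\;f'_n(\cdot,V'_n)\circ\cdots\circ f'_{m+1}(\cdot,V'_{m+1})\bigl(f_m(c_{m-1},V'_m)\bigr)
$$
is $\F^{V'}_n$-measurable and satisfies $\P[Y_{n,m}\ne X_n]\le\P[X'_m\ne X_m]\to0$ as $m\to-\infty$. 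Then $X_n$ is a limit in probability of $\F^{V'}_n$-measurable variables, hence itself $\F^{V'}_n$-measurable; the $V_k$ are recovered as $\varphi_{X_{k-1}}^{-1}(V'_k)$, so $\F^{X,V}=\F^{V'}$. This convergence-in-probability argument replaces both of the problematic steps in your write-up.
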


Let us introduce notations for the proof. From now on, a word of length $\ell_{n}$ on the alphabet $A$ will often be seen as a word of length $r_{n+1}$ on the alphabet $A^{\ell_{n+1}}$.

{\parindent0cm\textit{Proof of theorem \ref{theoB}.}}
\begin{itemize}
\item Choice among sub-words: for every $x$ in $A^{\ell_{n-1}}$ and $v$ in $\{1,\ldots,r_n\}$, denote by $f_{n}(x,v)=x(v)$ the $v$-th letter of $x$ seen as a word of length $r_n$ on the alphabet $A^{\ell_{n}}$. So $X_n=f_n(X_{n-1},V_n)$.
\item New innovations: let $c_{n-1}$ be the canonical word of length $r_{n}$ on the alphabet $A^{\ell_{n}}$ (for some fixed order on $A^{\ell_{n}}$). Let $\varphi_{X_{n-1}}$ be the canonical coupling associated to $X_{n-1}$ seen as a word of length $r_{n}$ on $A^{\ell_{n}}$. Set $V'_{n}=\varphi_{X_{n-1}}(V_{n})$.
\item Construction of a sequence $(X'_n)_{n \le 0}$ approximating $(X_n)_{n \le 0}$: set
$$X'_{n}=f_{n}(c_{n-1},V'_{n})=f_{n}(c_{n-1},\varphi_{X_{n-1}}(V_{n})).$$
\end{itemize}

The key point is to show that
$$ \P [X_{n} = {X}'_{n}] \to 1 ~\mbox{ as }~ n \to -\infty$$
by bounding above
$$ \P [X_{n-1}(V_{n}) \ne c_{n-1}(\varphi_{X_{n-1}}(V_{n})) ]. $$

Applying lemma~\ref{lemme technique 1} to $X_{n-1}$ seen as a word of length $r_{n}$ on the alphabet $A^{\ell_{n}}$, one gets that,
$$ \P[X_{n} \ne X_{n}'] \le N^{\ell_{n}}/r_{n}+2(N^{\ell_{n}}/r_{n})^{1/3}.$$

Hence $\P[X_{n} \ne X'_{n}]\to0$ since $N^{\ell_{n}} \ll r_{n}$.

Define an application $f'_{n} : A^{\ell_{n-1}} \times \{1,\ldots,r_n\} \to A^{\ell_n}$ by
$f'_{n}(x,v')=f_{n}(x,\varphi^{-1}_x(v'))$. Then $f'_{n}(X_{n-1},V'_n)=X_n$ and $f'_{n}(~\cdot ~,V'_{n})\circ\ldots\circ f'_{m+1}(~\cdot ~,V'_{m+1})(X_{m})=X_n$ for $m \le n \le 0$.
Therefore, under the assumption $\nabla$, one has,
\begin{eqnarray*}
\P\left[X_{n} \ne f'_{n}(~\cdot ~,V'_{n})\circ\ldots\circ f'_{m+1}(~\cdot ~,V'_{m+1})(X'_{m})\right] &\le& \P[X_{m} \ne X'_{m} ] \\
&\to& 0 \text{ as } m \to -\infty.
\end{eqnarray*}
This implies the convergence in probability:
\begin{eqnarray*}
X_{n}&=&\lim_{ m \to -\infty} f'_{n}(~\cdot ~,V'_{n})\circ f'_{n-1}(~\cdot ~,V'_{n-1})\circ \ldots\circ f'_{m+1}(~\cdot ~,V'_{m+1})(X'_{m}) \\
&=&\lim_{ m \to -\infty} f'_{n}(~\cdot ~,V'_{n})\circ \ldots\circ f'_{m+1}(~\cdot ~,V'_{m+1})\circ f_{m}(~\cdot ~,V'_{m})(c_{m-1})
\end{eqnarray*}
and proves that the innovations $(V'_{k})_{k \le n}$ determine the words $(X_{k})_{k \le n}$ and the innovations $(V_{k}=\varphi_{X_{k-1}}^{-1}(V'_{k}))_{k \le n}$. Thus, the filtration $(\F^{(X,V)}_{n})_{n\le 0}$ is generated by the innovations $(V'_{n})_{n \le 0}$.
\hfill $\square$

\section{Improving on condition $\nabla$}
\label{S2-split}
\subsection{Statement of the main result}\label{S2.2-split}

As said before, there is a gap between the conditions $\nabla$ and $\Delta$ under which the problem has been solved by Laurent. Our next theorem bridges the gap between the two conditions.

\begin{theoreme}
\label{theoC}
If the series $\displaystyle\sum_{n}\ln(r_{n})/\ell_{n}$ diverges (condition $\neg\Delta$),
then the filtration $(\F^{(X,V)}_{n})_{n\le 0}$ is of product type.
\end{theoreme}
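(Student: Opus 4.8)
The plan is to mimic the proof of Theorem~\ref{theoB}, but replacing the single canonical coupling step by a composition of \emph{partial} canonical couplings, so that at each time $n$ only the part of the word that has already been ``canonized'' is used. Under $\nabla$ one shows directly that $X_n$ and the fully canonical word $X'_n=f_n(c_{n-1},V'_n)$ coincide with high probability; this fails under the weaker $\neg\Delta$, because a single step only canonizes the word at scale $\ell_n$ on the alphabet $A^{\ell_{n+1}}$ and the error $N^{\ell_n}/r_n$ need not go to zero. The key idea is that, having already arranged that $X_m$ looks approximately canonical on the finer alphabet $A^{\ell_{m+1}}$ for all $m$ between $n$ and some far-away time, one does not pay the full price $N^{\ell_n}$ at each step: once many coordinates are in canonical position, the relevant ``effective alphabet size'' for the next coupling is much smaller, and it is the cumulative effect — governed by the series $\sum \ln(r_n)/\ell_n$ — that controls everything.

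Concretely I would proceed as follows. First, fix an order on $A$ and hence on each $A^\ell$, and for each $n$ define, instead of the coupling $\varphi_{X_{n-1}}$ associated to $X_{n-1}$ as a length-$r_n$ word on $A^{\ell_n}$, a coupling that only permutes subwords so as to make $X_{n-1}$ canonical on the alphabet $A$ up to a controlled number of positions. The cleanest implementation is to build, by induction on $m$ from $-\infty$ up to $0$, a coupled sequence $(X'_n,V'_n)$ together with a ``good event'' $G_m$ on which $X_n=X'_n$ for all $n$ in a growing window, with $\P[G_m]\to1$. At step $m$ one applies Lemma~\ref{lem13}: the probability that the newly exposed letters fail to be canonical is at most $M/r + 2(M/r)^{1/3}$ where now $M$ is not $N^{\ell_{n}}$ but the number of \emph{distinct} subwords that are still possible given the earlier canonization — and the point of the construction is to arrange that $\ln M$ is comparable to $\ell_n$ times $\sum_{k}\ln(r_k)/\ell_k$ over the relevant range, which can be made small by choosing the window to start late enough. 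Summing these error probabilities over the window (a telescoping/Borel--Cantelli style argument) and using that a convergent partial-sum tail of $\sum \ln(r_n)/\ell_n$ is small — here is where $\neg\Delta$, i.e.\ divergence, is used: divergence of the whole series lets us always find a window over which the \emph{local contribution to the relevant ratio} is controlled, because the increments $\ln(r_n)/\ell_n$ individually tend to behave well — we get $\P[G_m]\to1$.

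Once the approximation $\P[X_n\ne X'_n]\to0$ is established in this stronger, ``compatible along the whole window'' form, the conclusion is identical to the end of the proof of Theorem~\ref{theoB}: define $f'_n(x,v')=f_n(x,\varphi_x^{-1}(v'))$, note $f'_n(X_{n-1},V'_n)=X_n$, and pass to the limit
\[
X_n=\lim_{m\to-\infty} f'_n(\,\cdot\,,V'_n)\circ\cdots\circ f'_{m+1}(\,\cdot\,,V'_{m+1})\circ f_m(\,\cdot\,,V'_m)(c_{m-1})
\]
in probability, which exhibits $(V'_k)_{k\le n}$ as generating $(X_k,V_k)_{k\le n}$; Lemma~\ref{lem11} guarantees that $(V'_n)$ is a genuine sequence of innovations for $\F^{(X,V)}$, so the filtration is of product type.

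The main obstacle, and the part that needs genuine work rather than bookkeeping, is the design of the partial couplings and the quantitative estimate that the ``effective alphabet size'' $M$ at step $n$ satisfies $\ln M \lesssim \ell_n \cdot (\text{tail of } \sum_k \ln(r_k)/\ell_k)$, so that $M/r_n$ — the quantity fed into Lemma~\ref{lem13} — is summable over a suitably chosen window with total error going to $0$. In other words, one must show that the cost of re-canonizing at each scale telescopes correctly along the length hierarchy $\ell_{n-1}=r_n\ell_n$, and that divergence of $\sum\ln(r_n)/\ell_n$ is exactly what makes the windows exist. Handling the boundary effects of the partial coupling (the positions $j$ with $w(j)+H(w,j)M>r$, which Lemma~\ref{lem12} shows are the ``bad'' ones) and making sure these remain a negligible fraction across all scales simultaneously is the delicate combinatorial heart of the argument.
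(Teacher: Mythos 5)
Your instinct to replace the all-at-once canonical coupling of Theorem~\ref{theoB} by \emph{partial} canonical couplings is exactly the right one, and the finish you describe (composing the maps $f'_n$ and passing to the limit) is the same as in Theorem~\ref{theoB}. But the mechanism you propose for exploiting divergence of $\sum\ln(r_n)/\ell_n$ --- that earlier canonization reduces the ``effective alphabet size'' $M$ fed into Lemma~\ref{lemme technique 1} at the next scale --- does not withstand scrutiny, and this is where your argument has a genuine gap.

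The canonical coupling at stage $n-1$ does not constrain the subwords of $X_{n-1}$ at all: $X_{n-1}$ is still uniform on $A^{\ell_{n-1}}$, and its $r_n$ blocks of length $\ell_n$ each range over the full set $A^{\ell_n}$. ``Canonizing'' $X_{n}, X_{n+1},\dots$ at coarser scales tells you (approximately) which block of the previous word was chosen, but it does not shrink the pool of possible letters in $A^{\ell_n}$ against which Lemma~\ref{lemme technique 1} is applied at stage $n$. So the error at each step really is governed by $N^{\ell_n}/r_n$, which need not be small under $\neg\Delta$, and there is no telescoping of ``effective alphabet sizes.'' Relatedly, your claim that ``divergence of the whole series lets us find a window over which the local contribution is controlled, because the increments $\ln(r_n)/\ell_n$ individually tend to behave well'' is not warranted: divergence of a positive series says nothing about the individual terms being small or regular.

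What the paper actually does is quite different, and you are missing its two essential ingredients. First, condition $\neg\Delta$ is reformulated (Proposition~\ref{p21}) into the existence of a subsequence $\phi$ and numbers $\alpha_n\in(0,1]$ with $\sum\alpha_{2n}=\infty$ and $r_{\phi(n)}\gg N^{2\alpha_n\ell_{\phi(n)}}$. At time $\phi(2n)$ one then applies a \emph{partial} coupling of rate $\alpha_{2n}$: only the first $\alpha_{2n}\ell_{\phi(2n)}$ letters of each of the $r_{\phi(2n)}$ blocks enter the coupling, so the effective alphabet is $N^{\alpha_{2n}\ell_{\phi(2n)}}\ll r_{\phi(2n)}$ and Lemma~\ref{lemme technique 1} applies cleanly --- no accumulation across scales is used or needed. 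Second, because only a prefix of each block was canonized, one must rely on chance: the event $A_n$ that $X_{\phi(2n+1)}$ falls inside the canonized prefix of $X_{\phi(2n)}$ has probability exactly $\alpha_{2n}$, these events are independent (they depend on disjoint blocks of innovations), and $\sum\alpha_{2n}=\infty$ gives Borel--Cantelli. An auxiliary lemma (Lemma~\ref{lem2.1}) then lets you intersect ``$A_n$ occurs'' with ``the coupling at time $\phi(2n)$ succeeded'' infinitely often, which pins down $X_m$ from $(V'_k)_{k\le m}$. So the role of $\neg\Delta$ is not to make an error series summable, but to make the series of selection probabilities $\alpha_{2n}$ divergent; and the partial coupling is not cumulative, it is applied once at each selected scale and its failure to cover the whole word is compensated probabilistically rather than combinatorially.
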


\begin{remarque} Laurent~\cite{laurent2004ftd} states condition $\Delta$ as the convergence of the series
$$
\sum_{n} \frac{\ln(r_{n}!)}{\ell_{n-1}}.
$$
The inequalities $ \frac12 r\ln(r) \le \ln(r!) \le r \ln(r)$, valid  for every $r\ge 2$, ensure that the series
$$
\sum_{n}\frac{ \ln(r_{n})}{ \ell_{n}}
\quad\mbox{and}\quad
\sum_{n} \frac{\ln(r_{n}!)}{\ell_{n-1}}
$$
both converge or both diverge. Hence Laurent's condition $\Delta$ and the condition $\Delta$ which we stated in our introduction and used since, are indeed equivalent.
\end{remarque}

Condition $\neg\Delta$ is easy to express, but less handy to prove things. The equivalent wording below, which is closer to theorem~\ref{theoB}, is more convenient.

\begin{proposition}[Rewording of condition $\neg\Delta$]
\label{p21}
Condition $\Delta$ fails if and only if there exists a sequence $(\alpha_{n})_{n \le 0}$ of nonnegative real numbers and an increasing application $\phi:\Zm\to\Zm$, such that the following properties hold:
\begin{enumerate}
\item
For every $n\le0$,
$r_{\phi(n)}\ge N^{2\alpha_{n} \ell_{\phi(n)}}.$
\item
The series
$\displaystyle\sum_n \alpha_{n}$ diverges.
\end{enumerate}
Furthermore, when they exist, the sequence $(\alpha_{n})_{n \le 0}$ and the application $\phi$ can be chosen in such a way that the additional properties below hold:
\begin{enumerate}
\item[3.]
When $n\to-\infty$, $r_{\phi(n)}\gg N^{2\alpha_{n} \ell_{\phi(n)}}$ (in particular, $r_{\phi(n)} \to +\infty$).
\item[4.]
The series
$\displaystyle\sum_n \alpha_{2n}$ diverges.
\item[5.]
For every $n\le0$, $0<\alpha_n\le1$.
\item[6.]
For every $n\le-2$, the ratio $\alpha_n \ell_{\phi(n)}/\ell_{\phi(n+1)}$ is an integer.
\end{enumerate}
\end{proposition}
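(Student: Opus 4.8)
\textbf{Proof plan for Proposition~\ref{p21}.}

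The plan is to prove the two implications separately, then upgrade a bare witness $(\alpha_n,\phi)$ satisfying 1--2 into one satisfying 1--6. First, suppose $\Delta$ fails, i.e. $\sum_n \ln(r_n)/\ell_n=+\infty$. Set $\beta_n=\ln(r_n)/(2\ell_n\ln N)$, so that $r_n=N^{2\beta_n\ell_n}$ exactly, and $\sum_n\beta_n$ diverges. The naive choice $\phi=\mathrm{id}$, $\alpha_n=\beta_n$ gives property~1 with equality and property~2 immediately. Conversely, if such $(\alpha_n,\phi)$ exist, then property~1 says $\ln(r_{\phi(n)})/\ell_{\phi(n)}\ge 2\alpha_n\ln N$, and since the terms $\ln(r_m)/\ell_m$ are nonnegative, $\sum_m\ln(r_m)/\ell_m\ge\sum_n\ln(r_{\phi(n)})/\ell_{\phi(n)}\ge 2\ln N\sum_n\alpha_n=+\infty$, so $\Delta$ fails. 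That settles the ``if and only if''.

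The substantive part is arranging 3--6 simultaneously. I would do this by a careful choice of the subsequence $\phi$ and a truncation/smoothing of the exponents $\beta_n$. For property~5, replace $\beta_n$ by $\alpha_n=\min(\beta_n,1)$; divergence of $\sum\alpha_n$ is preserved because on the set where $\beta_n>1$ the term $\alpha_n=1$ while $\beta_n\le 2\beta_n\alpha_n$... more simply: $\sum\min(\beta_n,1)=+\infty$ whenever $\sum\beta_n=+\infty$, since if $\sum\min(\beta_n,1)<\infty$ then $\beta_n\to0$, hence eventually $\min(\beta_n,1)=\beta_n$, forcing $\sum\beta_n<\infty$. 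Strict positivity ($\alpha_n>0$) is free since $r_n\ge2$ gives $\beta_n>0$. For property~3, I want a \emph{strict} domination $r_{\phi(n)}\gg N^{2\alpha_n\ell_{\phi(n)}}$; the trick is to shrink $\alpha_n$ by a factor tending to $1$: along a subsequence where $\beta_n\to0$ (which exists, else $\Delta$ fails trivially and $\nabla$-type arguments suffice) one can take $\alpha_n=\tfrac12\beta_{\phi(n)}$ on a sparse enough set so that $2\alpha_n\ell_{\phi(n)}=\ell_{\phi(n)}\beta_{\phi(n)}\ll 2\ell_{\phi(n)}\beta_{\phi(n)}$ in the exponent of $N$, i.e. $N^{2\alpha_n\ell_{\phi(n)}}=r_{\phi(n)}^{1/2}\ll r_{\phi(n)}$; divergence of $\sum\tfrac12\beta_{\phi(n)}$ along the chosen subsequence must be ensured by choosing $\phi$ to capture enough mass of the divergent series $\sum\beta_m$. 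For property~6, I need $\alpha_n\ell_{\phi(n)}/\ell_{\phi(n+1)}\in\z$; since $\ell_{\phi(n)}/\ell_{\phi(n+1)}$ is already a (large) integer product $r_{\phi(n+1)+1}\cdots r_{\phi(n)}$, it suffices to round $\alpha_n$ \emph{down} to the nearest multiple of $\ell_{\phi(n+1)}/\ell_{\phi(n)}$; because this denominator is tiny compared with $\alpha_n$ (we can force $\phi(n)-\phi(n+1)\ge2$, making $\ell_{\phi(n)}/\ell_{\phi(n+1)}\ge r_{\phi(n)}\to\infty$ under property~3, so the rounding error is $o(\alpha_n)$), divergence of $\sum\alpha_n$ survives and property~1 still holds with the rounded value. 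Property~4 (divergence of $\sum\alpha_{2n}$) is obtained by further passing to a sub-subsequence: re-index so that the retained indices $n$ all have the same parity, or split the divergent sum $\sum\alpha_n=\sum\alpha_{2n}+\sum\alpha_{2n+1}$ and keep the divergent half, relabelling.

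The main obstacle is bookkeeping the interaction of all these adjustments: each of 3--6 wants to modify $\alpha_n$ or thin $\phi$, and I must check that no step destroys an earlier one and that property~1 (the inequality $r_{\phi(n)}\ge N^{2\alpha_n\ell_{\phi(n)}}$) and property~2 (divergence) remain intact throughout. The clean way is to perform the operations in the order 5, then 3, then 6, then 4: start from $\alpha_n=\min(\beta_n,1)\in(0,1]$; choose $\phi$ sparse with $\phi(n)-\phi(n+1)\ge2$ and replacing $\alpha_n$ by $\tfrac12\alpha_{\phi(n)}$, so that 3 and 5 hold and $\sum\alpha_n$ still diverges (this requires choosing $\phi$ to retain a divergent portion of $\sum\tfrac12\min(\beta_m,1)$, which is possible since that series diverges by construction); then round each $\alpha_n$ down to a multiple of $\ell_{\phi(n+1)}/\ell_{\phi(n)}$, using $\ell_{\phi(n)}/\ell_{\phi(n+1)}\ge r_{\phi(n)}\to+\infty$ to bound the relative error by $1/r_{\phi(n)}=o(1)$, so 6 holds and 1,2,3,5 persist; finally discard the odd-indexed retained terms to get 4. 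I expect the argument to be short once the order is fixed; the only genuinely delicate point is guaranteeing a divergent subseries survives the thinning, which follows from the elementary fact that a divergent series of nonnegative terms tending to $0$ always has an infinite subset of indices, arbitrarily sparse, carrying infinite mass.
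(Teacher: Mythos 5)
Your blueprint follows essentially the same road the paper takes (reduce $\neg\Delta$ to divergence of $\sum\beta_n$ with $\beta_n$ proportional to $\ln(r_n)/\ell_n$, do the easy direction by term-wise comparison, then thin with $\phi$ and truncate for 5, shift parity for 4, floor for 6), so I will focus on the two places where your argument has genuine holes.

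First, you assert that one can pick $\phi$ retaining divergent $\beta$-mass \emph{and} achieving $r_{\phi(n)}\to\infty$, but you never say why the two requirements are compatible. They are, but only because of a quantitative fact you do not cite: since $\ell_m\ge 2^{-m}$, for every fixed $M$ one has $\sum_{m:\,r_m\le M}\beta_m\le C\ln M\sum_m 2^m<\infty$, so the divergent mass of $\sum\beta_m$ lives entirely on indices with $r_m$ arbitrarily large. The paper encodes this in one stroke by choosing $\phi$ to land only on indices $m$ with $\beta_m\ge 2^m|m|$; as $\sum 2^m|m|<\infty$, this discards only a summable portion, and it additionally forces $\beta_m\ell_m\ge|m|$, hence $r_m\ge N^{4|m|}$. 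Second, and more seriously, your treatment of property~6 does not go through as written. You claim the rounding error ($\le\ell_{\phi(n+1)}/\ell_{\phi(n)}$) is $o(\alpha_n)$ because $\ell_{\phi(n)}/\ell_{\phi(n+1)}\ge r_{\phi(n)}\to\infty$. Two problems: the ratio $\ell_{\phi(n)}/\ell_{\phi(n+1)}=\prod_{k=\phi(n)+1}^{\phi(n+1)}r_k$ contains $r_{\phi(n+1)}$ but \emph{not} $r_{\phi(n)}$; and, far more importantly, even if the ratio tends to $\infty$, that does not make the error $o(\alpha_n)$ — you would need $\varrho_n:=\alpha_n\ell_{\phi(n)}/\ell_{\phi(n+1)}$ bounded away from $0$, and in fact $\varrho_n\ge1$ is needed just to keep $\alpha'_n>0$. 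If $\alpha_n\to0$ faster than $\ell_{\phi(n+1)}/\ell_{\phi(n)}$, your floor kills the term entirely and properties~2 and~5 collapse. The paper's extra constraint $\beta_{\phi(n)}\ge 2^{\phi(n)}|\phi(n)|$ is precisely what makes this work: it yields $\varrho_n\ge 2^{-3\phi(n+1)-1}|\phi(n)|\ge 8$ for $n\le-2$, so rounding loses at most a factor $8/9$. Without some such explicit lower bound on $\varrho_n$, built into the choice of $\phi$ rather than deduced after the fact, steps~3 and~6 cannot both be closed.
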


The proof of this result can be found in section~\ref{aux-split}.

\subsection{Construction of the new innovations}
\label{S2.3-split}

The construction of the new innovations uses a \textit{partial canonical coupling}. This tool sharpens the canonical coupling that was introduced in section~\ref{S1.2-split} and which has to be kept in mind.

Under the hypotheses of theorem \ref{theoC}, the ratios $(r_{n})_{n \le 0}$ are no longer big enough for the innovations associated to the canonical coupling to approach the entire word $X_n$ in only one step. Therefore several steps are necessary to get a good information on the word.

\begin{definition}[Partial canonical coupling]\label{couplage canonique partiel}
Let $w$ be a word of length $\ell r$ on the alphabet $A$ and $\lambda \in \{1,\cdots,\ell\}$ an integer. Denote by $\tilde{w}$ the word extracted from $w$ by splitting $w$ into $r$ sub-words of length $\ell$ and keeping only the first $\lambda$ letters of each sub-word.
In other words, if $w=(w_{1},\ldots,w_{\ell r})$, then
$$\tilde{w} = (w_{1},\ldots,w_{\lambda},w_{\ell+1},\ldots,w_{\ell+\lambda},\ldots,w_{(r-1)\ell+1},\ldots,w_{(r-1)\ell+\lambda})$$
is the word constituted of the letters $w_i$ such that $i=j$ mod $\ell$ with $1\le j\le\lambda$.

Let $\varphi_{\tilde{w}}$ be the canonical coupling of $\tilde{w}$ towards the canonical word $\tilde{c}$ of length $r$ on the alphabet $A^{\lambda}$.
Since $\varphi_{\tilde{w}}$ belongs to $\mathfrak{S}_{r}$ (the symmetric group on r letters), one can apply it to the $r$ sub-words of $w$ of length $\ell$.
The permutation $\varphi_{w}^{\lambda/\ell}=\varphi_{\tilde{w}}$ is called the partial canonical coupling of rate $\lambda/\ell$ associated to $w$.
\end{definition}

\vfill
\eject

\begin{figure}[h]
\begin{center}
\label{fig2}
\includegraphics[width=.95\textwidth]{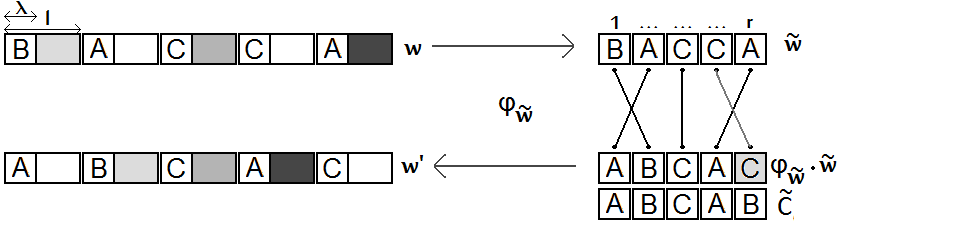}
\caption{Example of a partial canonical coupling. In the proof, this coupling will be considered with $\ell=\ell_{\phi(2n)}$, $\lambda=\alpha_{2n}$,~~$r=r_{\phi(2n)}$,  $w=X_{\phi(2n)-1}$ and $\tilde{w}=\tilde{X}_{\phi(2n)-1}$.}
\end{center}
\label{Partial canonical coupling}
\end{figure}

{\parindent0cm\textit{Proof of theorem \ref{theoC}.}}
Assume that $\neg\Delta$ holds. Fix $\phi$ and $(\alpha_n)_{n \le 0}$ which fulfill conditions 1,2,3,4,5 and 6 of proposition~\ref{p21}. We now construct new innovations $(V'_n)_{n \le 0}$ which generate the same filtration than the process $X$.

Let us define new innovations $(V'_{k})_{k \le 0}$ as follows. For every $k\le0$, define
$V'_k= \varphi_{X_{k-1}}^{\alpha_{2n}}(V_{k})=\varphi_{\tilde{X}_{k-1}}(V_{k})$ if there exists an integer $n$ (necessarily unique) such that $k=\phi(2n)$, and
$V'_k=V_k$ otherwise.

Lemma 2.3 will be used to show that with probability close to $1$, the first $\alpha_{2n} \ell_{\phi(2n)}$ letters of the words $X_{\phi(2n)}$ and those of  $f_{\phi(2n)}(C_{\phi(2n)-1},V'_{\phi(2n)})$ coincide.

\vfill
\eject

\begin{figure}[h]
\begin{center}
\label{fig3}
\includegraphics[width=.95\textwidth]{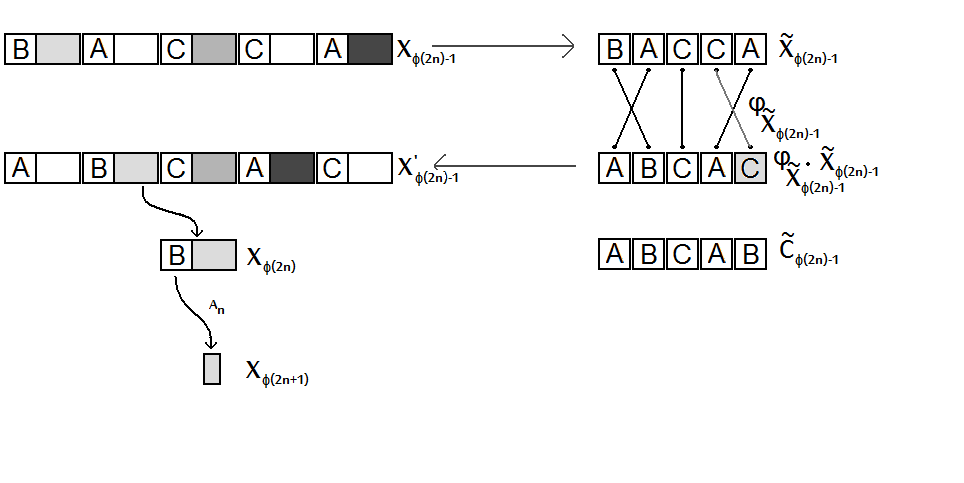}
\caption{Description of the method.}
\end{center}
\label{Description methode}
\end{figure}

\subsection{Proof of the main result}\label{S2.4-split}

This proof is split into three steps.

\paragraph{First step: $X_{\phi(2n+1)}$ comes from the beginning of $X_{\phi(2n)}$ infinitely often}\ \\

One focuses on the events
$$A_{n}=\{ X_{\phi(2n+1)} \text{ comes from the first } \alpha_{2n}\ell_{\phi(2n)} \text{ letters of }X_{\phi(2n)} \}. $$

One computes $\P[A_{n}]$ by counting: the number of possible choices for the innovations  $V_k$ for $\phi(2n)+1\le k\le\phi(2n+1)$ is
$$
\prod_{k=\phi(2n)+1}^{\phi(2n+1)}r_k
=
\ell_{\phi(2n)}/\ell_{\phi(2n+1)}.
$$
The number of cases such that $A_{n}$ occurs is the number of sub-words of length $\ell_{\phi(2n+1)}$ entirely included in the first $\alpha_{2n}\ell_{\phi(2n)}$ letters of $X_{\phi(2n)}$: this number is $\alpha_{2n}\ell_{\phi(2n)}/\ell_{\phi(2n+1)}$ thanks to the additional hypothesis that $\alpha_{2n}\ell_{\phi(2n)}/\ell_{\phi(2n+1)}$ is an integer. Therefore $\P [A_n]=\alpha_{2n}$
and the series $ \sum_{n} \P[A_{n}]$ diverges.

Moreover $A_{n}$ is a (deterministic) function of $V_k$ for $\phi(2n)+1\le k\le\phi(2n+1)$, hence the events $A_{n}$ are independent and the Borel-Cantelli lemma ensures that almost surely, $A_{n}$ occurs for infinitely many $n$.

Note that $A_n \in \F_{\phi(2n+1)}^{V'}$ for every $n$ since $V'_k=V_k$ for every time $k$ which is not one of the integers $\phi (2n)$.

\paragraph{Second step: Use of lemma~\ref{lemme technique 1}} \ \\

Our purpose is to prove lemma~\ref{lem24}.
\begin{lemme}\label{lem24}
For every $n \le 0$, set $I_{\phi(2n)}=\{1,...,\alpha_{2n}\ell_{\phi(2n)}\}$ and fix a word $C_{\phi(2n)-1}$ of length $\ell_{\phi(2n)-1}=r_{\phi(2n)}\ell_{\phi(2n)}$ on $A$ such that $\tilde{C}_{\phi(2n)-1}$ is the canonical word of length $r_{\phi(2n)}$ on the alphabet $A^{\alpha_{2n} \ell_{\phi(2n)}}$. The probability for $X_{\phi(2n)}(I_{\phi(2n)})$ (the first $\alpha_{2n}\ell_{\phi(2n)}$ letters of $X_{\phi(2n)}$) to be the $V'_{\phi(2n)}$-th sub-word of $\tilde{C}_{\phi(2n)-1}$ converges to $1$ as $n$ tends towards $-\infty$.
That is to say,
$$ \P\left[X_{\phi(2n)}(I_{\phi(2n)}) = f_{\phi(2n)}(C_{\phi(2n)-1},V'_{\phi(2n)})(I_{\phi(2n)})\right] \to 1, \text{ as } n \to - \infty.$$
\end{lemme}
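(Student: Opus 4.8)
The plan is to recognize Lemma~\ref{lem24} as precisely an instance of Lemma~\ref{lemme technique 1} (lemma~\ref{lem13}), applied to a suitably chosen word and alphabet, and then to control the resulting error term using the quantitative hypotheses in items 1 and 3 of proposition~\ref{p21}. First I would view $X_{\phi(2n)-1}$ as a word of length $r_{\phi(2n)}$ on the alphabet $A^{\ell_{\phi(2n)}}$, and observe that keeping only the first $\alpha_{2n}\ell_{\phi(2n)}$ letters of each of its $r_{\phi(2n)}$ subwords produces exactly the extracted word $\tilde X_{\phi(2n)-1}$, a word of length $r_{\phi(2n)}$ on the alphabet $A^{\alpha_{2n}\ell_{\phi(2n)}}$. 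Since $X_{\phi(2n)-1}$ is uniform on $A^{\ell_{\phi(2n)-1}}$, the extracted word $\tilde X_{\phi(2n)-1}$ is uniform on $(A^{\alpha_{2n}\ell_{\phi(2n)}})^{r_{\phi(2n)}}$, and $V_{\phi(2n)}$ is uniform on $\{1,\ldots,r_{\phi(2n)}\}$ independently of it.

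Next I would translate the event in the lemma into the language of lemma~\ref{lem13}. Taking the first $\alpha_{2n}\ell_{\phi(2n)}$ letters of $X_{\phi(2n)}$ is the same as taking the $V_{\phi(2n)}$-th letter of the word $\tilde X_{\phi(2n)-1}$ (a letter being an element of $A^{\alpha_{2n}\ell_{\phi(2n)}}$); and $f_{\phi(2n)}(C_{\phi(2n)-1},V'_{\phi(2n)})(I_{\phi(2n)})$, by construction of $C_{\phi(2n)-1}$ and of $V'_{\phi(2n)}=\varphi_{\tilde X_{\phi(2n)-1}}(V_{\phi(2n)})$, equals $\tilde c(\varphi_{\tilde X_{\phi(2n)-1}}(V_{\phi(2n)}))$ where $\tilde c$ is the canonical word of length $r_{\phi(2n)}$ on $A^{\alpha_{2n}\ell_{\phi(2n)}}$. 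Thus the complementary probability is exactly $\P[\tilde X_{\phi(2n)-1}(V_{\phi(2n)})\ne \tilde c(\varphi_{\tilde X_{\phi(2n)-1}}(V_{\phi(2n)}))]$, to which lemma~\ref{lem13} applies with $r=r_{\phi(2n)}$ and $M=N^{\alpha_{2n}\ell_{\phi(2n)}}$, giving the bound $M/r+2(M/r)^{1/3}$.

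Finally I would show $M/r\to0$, i.e. $N^{\alpha_{2n}\ell_{\phi(2n)}}/r_{\phi(2n)}\to0$. This is immediate from item~3 of proposition~\ref{p21}: $r_{\phi(n)}\gg N^{2\alpha_n\ell_{\phi(n)}}\ge N^{\alpha_n\ell_{\phi(n)}}$ (using $\alpha_n\ell_{\phi(n)}\ge0$ and $N\ge2$), hence a fortiori along the even indices $r_{\phi(2n)}\gg N^{2\alpha_{2n}\ell_{\phi(2n)}}\ge N^{\alpha_{2n}\ell_{\phi(2n)}}$, so $M/r\to0$ and therefore $M/r+2(M/r)^{1/3}\to0$, which is the assertion. (Even the cruder item~1, $r_{\phi(n)}\ge N^{2\alpha_n\ell_{\phi(n)}}$, would give $M/r\le N^{-\alpha_{2n}\ell_{\phi(2n)}}$; one needs the strict domination of item~3 only to force this to $0$ rather than merely stay bounded.)

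The argument is essentially a bookkeeping exercise: there is no real obstacle beyond carefully matching the combinatorial objects — confirming that the extracted word $\tilde X_{\phi(2n)-1}$ is genuinely uniform on the product alphabet, that the partial canonical coupling $\varphi_{X_{\phi(2n)-1}}^{\alpha_{2n}/\ell_{\phi(2n)}}$ is by definition the canonical coupling $\varphi_{\tilde X_{\phi(2n)-1}}$ towards $\tilde c$, and that the hypothesis on $C_{\phi(2n)-1}$ ($\tilde C_{\phi(2n)-1}=\tilde c$) is exactly what is needed for $f_{\phi(2n)}(C_{\phi(2n)-1},\cdot)(I_{\phi(2n)})$ to read off the subwords of $\tilde c$. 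Once these identifications are in place, lemma~\ref{lem13} and item~3 of proposition~\ref{p21} do all the work.
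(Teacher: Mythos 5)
Your proof is correct and follows essentially the same route as the paper: translating the event into the assertion that the $V_{\phi(2n)}$-th letter of $\tilde X_{\phi(2n)-1}$ matches the corresponding letter of the canonical word $\tilde c$, applying lemma~\ref{lemme technique 1} with $M=N^{\alpha_{2n}\ell_{\phi(2n)}}$ and $r=r_{\phi(2n)}$, and invoking item~3 of proposition~\ref{p21} so that $M/r\to0$. Your explicit verification that $\tilde X_{\phi(2n)-1}$ is uniform on $(A^{\alpha_{2n}\ell_{\phi(2n)}})^{r_{\phi(2n)}}$ and independent of $V_{\phi(2n)}$ is a modest but welcome addition of rigor to what the paper leaves implicit.
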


\begin{proof}[of lemma~\ref{lem24}]
Note that $X_{\phi(2n)}(J_{\phi(2n)})$ is the $V_{\phi(2n)}$-th letter of $\tilde{X}_{\phi(2n)-1}$ seen as a word of length $r_{\phi(2n)}$ on the alphabet $A^{\alpha_{2n} \ell_{\phi(2n)}}$ (where $\tilde{X}_{\phi(2n)-1}$ is built from $X_{\phi(2n)-1}$ according to definition~\ref{couplage canonique partiel} and the caption of figure~\ref{fig2}). Hence
\begin{eqnarray*}
& & \P\left[X_{\phi(2n)}(I_{\phi(2n)}) = f_{\phi(2n)}(C_{\phi(2n)-1},V'_{\phi(2n)})(I_{\phi(2n)})\right]\\
& &\qquad\qquad\qquad= \P\left[\tilde{X}_{\phi(2n)-1}(V_{\phi(2n)}) = \tilde{C}_{\phi(2n)-1}(\varphi_{\tilde{X}_{\phi(2n)-1}}(V_{\phi(2n)}))\right]
\end{eqnarray*}
Lemma~\ref{lemme technique 1} applied to $\tilde{X}_{\phi(2n)-1}$ seen as a word of length $r_{\phi(2n)}$ on the alphabet $A^{\alpha_{2n}\ell_{\phi(2n)}}$ provides
\begin{eqnarray*}
& & \P\left[\tilde{X}_{\phi(2n)-1}(V_{\phi(2n)}) \ne f_{\phi(2n)}(C_{\phi(2n)-1},V'_{\phi(2n)})(I_{\phi(2n)})\right] \\
& &\qquad\qquad\qquad\le N^{\alpha_{2n}\ell_{\phi(2n)}}/r_{\phi(2n)}+2(N^{\alpha_{2n}\ell_{\phi(2n)}}/r_{\phi(2n)})^{1/3}.
\end{eqnarray*}

From proposition~\ref{p21}, each term converges to $0$, hence lemma~\ref{lem24} holds.
\end{proof}

\paragraph{Third step: Use of the innovations to recover $(X_{n})_{n\le 0}$} \ \\

Our aim is to show that, for every $m\le0$, $X_m$ is a function of the innovations $(V_k)_{k\le m}$.
Consider once again the events
$$A_{n}=\{ X_{\phi(2n+1)} \text{ comes from the first } \alpha_{2n}\ell_{\phi(2n)} \text{ letters of }X_{\phi(2n)} \}. $$
If the event $A_n$ occurs and if
$$X_{\phi(2n)}(I_{\phi(2n)}) = f_{\phi(2n)}(C_{\phi(2n)-1},V'_{\phi(2n)})(I_{\phi(2n)}),$$
then
$$
X_{\phi(2n+1)}=f_{\phi(2n+1)}(\cdot,V'_{\phi(2n+1)})\circ \ldots\circ f_{\phi(2n)}(\cdot,V'_{\phi(2n)})(C_{\phi(2n)-1}).
$$
Moreover, since $A_n$  depends only on $(V'_{\phi(2n)+1},\cdots,V'_{\phi(2n+1)})$, it is independent of $\{X_{\phi(2n)}(I_{\phi(2n)}) = f_{\phi(2n)}(C_{\phi(2n)-1},V'_{\phi(2n)})(I_{\phi(2n)})\}$. Thus
$$\begin{array}{r}
\P[ X_{\phi(2n+1)}=f_{\phi(2n+1)}(\cdot,V'_{\phi(2n+1)})\circ \ldots\circ f_{\phi(2n)}(\cdot,V'_{\phi(2n)})(C_{\phi(2n)-1})~|~A_n] \qquad\qquad\\
\geq \P[X_{\phi(2n)}(I_{\phi(2n)}) = f_{\phi(2n)}(C_{\phi(2n)-1},V'_{\phi(2n)})(I_{\phi(2n)})],\\
\end{array}$$
which tends to $1$ as $n$ goes to $-\infty$ by lemma~\ref{lem24}.

Some formulas below will be easier to read thanks to the introduction of the function $g_n$ which associates $X_{n+1}$ to $(X_{n},V'_{n+1})$. Namely, for every integer $n\le0$, every word $x$ in $A^{\ell_n}$ and every integer  $1\le v \le r_n$, define
\begin{eqnarray*}
g_n(x,v) & = & f_n(x,\varphi_{\tilde{x}}(v)) \text{ if }n \text{ is one of the integers }\phi(2k),\\
& = & f_n(x,v)  \text{ otherwise}.
\end{eqnarray*}

Let $X'_{\phi(2n)}$ be the word of length $\ell_{\phi(2n)}$ whose first $\alpha_{2n}\ell_{\phi(2n)}$ letters are those of the word $f_{\phi(2n)}(C_{\phi(2n)-1},V'_{\phi(2n)})$ and the others are set to 1. Then, by lemma~\ref{lem24},
$$ \P\left[X_{\phi(2n)}(I_{\phi(2n)}) = X'_{\phi(2n)}(I_{\phi(2n)})\right] \to 1 \text{ as } n \to - \infty .$$

For $n<m\le0$, call $X'_{m,n}$ the offspring of $X'_{\phi(2n)}$ at time $m$, that is,
$$
X'_{m,n}=g_{m-1}(\cdot,V'_{m})\circ g_{m-2}(\cdot,V'_{m-1})\circ\ldots\circ g_{\phi(2n)+1}(\cdot,V'_{\phi(2n)})(X'_{\phi(2n)-1}).
$$
Then,
$$
 \P\left[X_{m} \ne X'_{m,n} | A_n \right] \le \P\left[X_{\phi(2n)}(I_{\phi(2n)}) \ne X'_{\phi(2n)}(I_{\phi(2n)})| A_n \right],
$$
hence
$\P\left[X_{m} \ne X'_{m,n} | A_n \right] \to0,$ as $n$ goes to $-\infty$.

Lemma~\ref{lem2.1} below, which will be proved at the end of section~\ref{aux-split}, enables us to use
Borel-Cantelli's lemma twice.

\begin{lemme}\label{lem2.1}
Let $(a_n)_{n\ge0}$ and $(b_n)_{n\ge0}$ denote two bounded sequences of nonnegative real numbers such that the series $\displaystyle\sum_nb_n$ diverges and such that $a_n\ll b_n$. Then there exists an increasing application $\theta : \N \to \N$ such that the series
$\displaystyle\sum_{n} a_{\theta(n)}$ converges and the series $\displaystyle\sum_{n} b_{\theta(n)}$ diverges.
\end{lemme}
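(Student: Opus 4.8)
The plan is to extract $\theta$ greedily in blocks, making the $b$-sums along the blocks large while the $a$-contribution of each block is controlled by the hypothesis $a_n \ll b_n$. Concretely, since the sequences are bounded, fix $M$ with $a_n, b_n \le M$ for all $n$, and since $a_n \ll b_n$ fix, for each $k \ge 1$, an index $N_k$ such that $a_n \le b_n / 2^k$ for all $n \ge N_k$; we may assume $N_1 < N_2 < \cdots$. We will build $\theta$ so that its image meets $[N_k, +\infty)$ from the $k$-th block onwards and so that the partial sum of the $b_{\theta(n)}$ over the $k$-th block of indices is at least $1$.

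First I would construct the blocks. Starting from any point beyond $N_1$, use the divergence of $\sum_n b_n$: since $\sum_{n \ge N} b_n = +\infty$ for every $N$, one can pick a finite increasing run of indices $m_1 < m_2 < \cdots < m_{j_1}$, all $\ge N_1$, with $\sum_{i=1}^{j_1} b_{m_i} \ge 1$; these indices form block $1$ and become $\theta(1), \ldots, \theta(j_1)$. Having finished block $k-1$ at some index $p$, set the start of block $k$ to be $\max(p+1, N_k)$ and again use divergence of the tail of $\sum b_n$ to select a finite increasing run of indices, all $\ge N_k$ and all $> p$, whose $b$-values sum to at least $1$; these become the next batch of values of $\theta$. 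Concatenating the blocks defines a strictly increasing $\theta : \N \to \N$ (reindex the blocks consecutively).

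Then I would check the two conclusions. For the $b$-series: each block contributes at least $1$ to $\sum_n b_{\theta(n)}$, and there are infinitely many blocks, so $\sum_n b_{\theta(n)} = +\infty$. For the $a$-series: every index in block $k$ is $\ge N_k$, hence for such an index $m$ one has $a_m \le b_m / 2^k \le M/2^k$ if the block has one element; more carefully, summing $a_m \le b_m/2^k$ over the block $k$ gives a block-contribution at most $(1/2^k)\sum_{\text{block }k} b_m$. The only subtlety is that the $b$-sum over a block, while $\ge 1$, could a priori be large; but we can insist during the selection that we stop a block as soon as its running $b$-sum first reaches $1$, so that the block-sum is at most $1 + M$ (the last term added is $\le M$). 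Hence the $a$-contribution of block $k$ is at most $(1+M)/2^k$, and $\sum_n a_{\theta(n)} \le (1+M)\sum_{k \ge 1} 2^{-k} = 1+M < \infty$.

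The main (and essentially only) obstacle is bookkeeping: making sure the block boundaries are chosen so that $\theta$ is genuinely increasing and that each block lies entirely beyond the corresponding $N_k$, so that the per-term bound $a_m \le b_m/2^k$ applies uniformly on block $k$; once one imposes "start block $k$ at $\max(p+1,N_k)$ and stop as soon as the running $b$-sum hits $1$", everything else is the routine geometric-series estimate above. There is no analytic difficulty; the hypothesis $a_n \ll b_n$ together with the divergence of $\sum b_n$ is exactly what is needed, and boundedness is only used to bound the overshoot of the last term of each block.
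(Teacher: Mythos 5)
Your proof is correct and follows essentially the same strategy as the paper's: fix tail indices $N_k$ from $a_n\ll b_n$, build disjoint blocks of consecutive indices starting at $\max(j_{k-1}+1,N_k)$, use divergence of $\sum b_n$ together with boundedness to get a block $b$-sum between a fixed threshold and that threshold plus the bound, and then sum the geometric series $\sum_k 2^{-k}$ to control the $a$-contribution. The only differences are cosmetic (the paper targets block $b$-sums in $[B,2B)$ where $B$ bounds $b_n$, while you target $[1,1+M]$); the construction and estimates are the same.
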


\paragraph{Continuation of the proof of theorem~\ref{theoC}}~\\
Since the series $\displaystyle\sum_n \P(A_n)$ diverges
and, for every fixed $m\le0$,
$$\P[X_m \ne X'_{m,n}|A_n] \to 0, \mbox{ when } n \to -\infty,$$
our last lemma applied to the sequences $(\P[A_n])_{n \le m}$ and $(\P[X_m\ne X'_{m,n};A_n])_{n\le m}$ provides a deterministic increasing application $\theta: -\n \to -\n$ such that
$$\sum_{n\le m} \P\left[X_{m} \ne X'_{m,\theta(n)} ; A_{\theta(n)} \right] < \infty \text{ and } \sum_{n\le m} \P\left[A_{\theta(n)} \right] = \infty.$$
By Borel-Cantelli's lemma, the events $\{X_{m} \ne X'_{m,\theta(n)}\} \cap A_{\theta(n)}$ occur only for a finite number of times $n$, whereas the independent events $A_{\theta(n)}$ occur infinitely often. Thus for every word $x$ in $A^{l_m}$, almost surely,
$$ \{X_{m}=x\} = \limsup_{n \to -\infty} A_{\theta(n)} \cap \{ X'_{m,\theta(n)}=x\}.
$$
This proves that $\{X_{m}=x\}$ belongs to $\F^{V'}_{m}$, hence $X_{m}$ is a function of the innovations $(V'_{m})_{m \le 0}$.
\hfill $\square$

\subsection{Proof of some auxiliary facts} \label{aux-split}

\begin{proof}[of proposition~\ref{p21}]
Assume that  $(\alpha_{n})_{n \le 0}$ and $\phi$ exist such that 1 and 2 hold. Then,
 for every $n \le 0$, $r_{\phi(n)}\ge N^{2\alpha_{n} \ell_{\phi(n)}}$ hence $\log_Nr_{\phi(n)}\ge 2\alpha_{n}\ell_{\phi(n)} $.
Therefore $$\sum_{n} \frac{\log_Nr_{n}}{\ell_{n}} \ge \sum_{n} \frac{\log_Nr_{\phi(n)}}{\ell_{\phi(n)}} \ge 2\sum_{n} \alpha_{n},
$$
and the last series diverges hence condition $\neg\Delta$ holds.

Conversely, assume that condition $\neg\Delta$ holds.
Let $\beta_{n} = \frac14\log_N(r_{n})/\ell_{n}$. Then, $r_n=N^{4\beta_{n}\ell_{n}}$ hence
$r_{n}/N^{2\beta_{n}\ell_{n}} = N^{2\beta_{n}\ell_{n}}$.

Since $\displaystyle\sum_{n\le0} \beta_{n}$ diverges and $\displaystyle\sum_{n \le 0} |n| 2^{n}$ converges, there exists an increasing application $\phi$ such that
the series $\displaystyle\sum_{n} \beta_{\phi(n)}$ diverges and such that $\beta_{\phi(n)}\ge 2^{\phi(n)} |\phi(n)|$ for every $n\le0$.

Replacing, if necessary, $\phi$ by $\varphi$ given by $\varphi(n)=\phi(n-1)$, one can ensure that the series $\displaystyle\sum_{n} \beta_{\phi(2n)}$ diverges as well.

Since $r_n\ge 2$ for every $n$, $\ell_{\phi(n)} \ge 2^{-\phi(n)}$, hence $\beta_{\phi(n)}\ge 2^{\phi(n)} |\phi(n)|$ implies that
$\beta_{\phi(n)}\ell_{\phi(n)} \ge |\phi(n)| \ge |n|.$
Hence,
$$
r_{\phi(n)}/N^{2\beta_{\phi(n)} \ell_{\phi(n)}} = N^{2\beta_{\phi(n)}\ell_{\phi(n)}}\ge N^{2 |n|}\ge1,
$$
 and this sequence converges to $+\infty$. Furthermore, defining $\alpha_n=\min(\beta_{\phi(n)},1)$, one sees that $(\alpha_n)_n$  fulfills conditions 1-2-4-5 (condition 3 being a consequence of condition 4).

We now show how to build from $(\alpha_{n})_{n}$ a sequence $(\alpha'_{n})_{n}$ such that condition 6 holds as well.

Recall that from the construction above, $r_n=N^{4\beta_n\ell_n}$ and $\beta_{\phi(n)}\ell_{\phi(n)}\ge|\phi(n)|$, hence
 $r_{\phi(n)} \ge N^{4|\phi(n)|} \ge 2^{-4\phi(n)}$. Also $\alpha_n \ge 2^{\phi(n)}|\phi(n)|$.

These inequalities implies that
the ratio $\varrho_n=\alpha_{n} \ell_{\phi(n)}/\ell_{\phi(n+1)} $ is such that
\begin{eqnarray*}
\varrho_n&=& \alpha_{n}\left(\prod_{k=\phi(n)+1}^{\phi(n+1)-1} r_{k}\right) r_{\phi(n+1)} \\
&\ge&2^{\phi(n)}|\phi(n)|~ 2^{\phi(n+1)-\phi(n)-1} 2^{-4\phi(n+1)} \\
&=&2^{-3\phi(n+1)-1} |\phi(n)|.
\end{eqnarray*}
Since $\phi(n+1) \le -1$ and  $|\phi(n)|\ge2$ for every $n\le-2$, this shows that $\varrho_n\ge8$ for every $n\le-2$.

Thus, $8\varrho_n/9\le\lfloor\varrho_n \rfloor\le\varrho_n$ and the sequence $(\alpha'_{n})_{n}$
defined by
$$\alpha'_{n} = \frac{\lfloor\alpha_{n}\ell_{\phi(n)}/\ell_{\phi(n+1)} \rfloor}{\ell_{\phi(n)}/\ell_{\phi(n+1)} },
$$
is such that $\frac89 \alpha_n \le \alpha'_n \le \alpha_n$ for every $n\le-2$. Therefore it fulfills the conditions already satisfied by the sequence $(\alpha_{n})_{n\le 0}$ and the additional  condition that $\alpha'_n\ell_{\phi(n)}/\ell_{\phi(n+1)}$ is an integer for every $n \le 0$. \hfill $\square$ \end{proof}

\begin{proof}[of lemma~\ref{lem2.1}]
Call $B$ any finite upper bound of the sequence $(b_n)_{n \in \n}$. Since $a_n\ll b_n$, for any positive integer $k$, there exists an integer $N_k$ such that for every $n \ge N_k$, $a_n \le b_n 2^{-k}$.
We now define a sequence of disjoint intervals of integers $J_k=\{i_k,\ldots,j_k\}$, as follows. Set $j_{-1}=-1$ and let $k\ge0$.

Once $j_{k-1}$ is defined, let $i_k=\max\{j_{k-1}+1,N_k\}$.
Since the series $\displaystyle\sum_{n} b_n$ diverges and $0\le b_n \le B$ for every $n \in \n$, one can choose an integer $j_k\ge i_k$ such that $\displaystyle B \le \sum_{n=i_k}^{j_k} b_n <2B$ and let $I_k=\{i_k,\ldots,j_k\}$. Note that $\displaystyle\sum_{n \in J_k} b_n \ge B$ and $\displaystyle\sum_{n \in J_k} a_n \le 2B/2^k$.

Calling $Q$ the set $\displaystyle\bigcup_{k \in \n} I_k$, one gets
$$ \sum_{{n \in Q}} a_{n} = \sum_{{k \in \n}} \sum_{{n \in J_k}} a_n \le 2B \sum_{{k \in \n}} 1/2^k =4B,$$
and
$$ \sum_{{n \in Q}} b_{n} = \sum_{{k \in \n}} \sum_{{n \in J_k}} b_n \ge \sum_{{k \in \n}} B = \infty,
$$
which completes the proof of the lemma. \hfill $\square$ \end{proof}

\section{Proof of non standardness under $\Delta$}
\label{S3-split}

The non standardness of the split-word process was established by Laurent in his thesis  \cite{laurent2004ftd} and a similar result was obtained by Vershik in \cite{vershik1995tds} in the context of decreasing sequences of measurable partitions. In this section we give a simplified presentation of Laurent's proof.

The proof of this result involves a subtle notion on filtrations, which is standardness.
The notion of standardness was first introduced by Vershik for decreasing measurable partitions. This notion has been adapted to continuous time filtrations by Tsirelson~\cite{tsirelson1997triple}, it has been formulated by Dubins, Feldman, Smorodinsky and Tsirelson~\cite{dubins1996decreasing} for continuous time filtrations and by Emery and Schachermayer~\cite{emery2001vershik} for discrete time filtrations.


Many necessary and sufficient conditions for standardness have been established, for instance Vershik's
self-joining criterion and various notions of cosiness. All these criterions are based on coupling methods. Checking them in specific cases is often a technical task. Yet, these criterions are the key tool to solve some difficult problems. For example, Tsirelson defines and uses a notion of cosiness to prove that the filtration of Walsh's Brownian motion is not Brownian since it is non standard.

By definition, a filtration $\F=(\F_n)_{n\le0}$ indexed by $n \le 0$ is \textit{standard\/}
if, modulo an enlargement of the probability space, one can immerse $\F$ in a filtration generated by an i.i.d.\ process.
Recall that a filtration $\F=(\F_n)_{n \le 0}$ is \textit{immersed\/} in a filtration $\G=(\G_n)_{n \le 0}$ if, for every $n\le0$, $\F_n\subset\G_n$ and $\F_{n}$ and $\G_{n-1}$ are independent conditionally on $\F_{n-1}$. Roughly speaking, this means that $\G_{n-1}$ gives no further information on $\F_{n}$ than $\F_{n-1}$ does. Equivalently, $\F$ is immersed in $\G$ if every $\F$-martingale is a $\G$-martingale.

Laurent negates the so-called I-cosiness property to prove that under $\Delta$, the filtration is non standard and therefore non of product type.

We follow the method that Smorodinsky \cite{smorodinsky1998pns} used to prove the non-existence of a ``generating parametrization'' in the case where $r_n = 2$ for every $n\le 0$. This method still works in the general case and provides the non standard behaviour of the filtration.

The purpose of this section is to show that if the sequence $(r_{n})_{n \le 0}$ is $\Delta$, then the filtration $\mathcal{F}^{(X,V)}$ is non standard.
Note that
$\Delta$ holds for every bounded sequence $(r_{n})_{n}$.

\subsection{Preliminary notions}
\label{S3.1-split}

We first recall the notion of I-cosiness, due to \'Emery and Schachermayer \cite{emery2001vershik}.

\begin{definition}[Immersion and co-immersion]
Let $\mathcal{F}=(\mathcal{F}_n)_{n \le 0}$ and $\mathcal{G}=(\mathcal{G}_n)_{n \le 0}$ denote two filtrations defined on the same probability space.
\begin{itemize}
\item  $\mathcal{F}$ is immersed in $\mathcal{G}$ if every martingale in $\mathcal{F}$ is a martingale in $\mathcal{G}$.
\item  $\mathcal{F}$ and $\mathcal{G}$ are co-immersed if $\mathcal{F}$ and $\mathcal{G}$ are both immersed in $\mathcal{F} \vee \mathcal{G}$.
\end{itemize}
\end{definition}

\begin{definition}[I-cosiness]
Let $\mathcal{F}$ be a filtration on a probability space $(\Omega,\mathcal{A},\P)$. One says that $\mathcal{F}$ satisfies the I-cosiness criterion if for every random variable $Y$ measurable for $\mathcal{F}_0$ with values in a finite set and for every real $\delta > 0$, there exists a probability space $(\overline{\Omega},\overline{\mathcal{A}},\overline{\P})$ and two filtrations $\mathcal{F'}$  and $\mathcal{F''}$ on $(\overline{\Omega},\overline{\mathcal{A}},\overline{\P})$, such that the following properties hold.
\begin{itemize}
\item The filtrations $\mathcal{F}'$ and $\mathcal{F}''$ are both isomorphic to $\mathcal{F}$.
\item The filtrations $\mathcal{F}'$ and $\mathcal{F}''$ are co-immersed.
\item  There exists an integer $n_0$ such that $\mathcal{F}'_{n_0}$ and $\mathcal{F}''_{n_0}$ are independent.
\item The copies $Y'$ and $Y''$ of $Y$ by the isomorphisms of the first condition, verify $\overline{\P}[Y' \ne Y'']<\delta$.
\end{itemize}
\end{definition}

The proof of the non standardness of the filtration $\mathcal{F}^{(X,V)}$ uses the easy part of the equivalence between I-cosiness and standardness.

\begin{theoreme}[Corollary~5~\cite{emery2001vershik}]
A filtration is standard if and only if it satisfies the I-cosiness criterion and is essentially separable.
\end{theoreme}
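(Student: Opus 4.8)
This is the \'Emery--Schachermayer equivalence; here is how I would organise its proof. Under the standing hypothesis of essential separability the assertion reduces to the equivalence between $\F$ being standard and $\F$ being I-cosy, and essential separability is used precisely to fix a countable family $(Y^{(k)})_{k\ge1}$ of finite-valued generators of $\F_0$, which is what makes the limiting argument below converge.

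For the easy implication, standard $\Rightarrow$ I-cosy, I would start from an immersion (after enlarging the space) of $\F$ into the natural filtration $\G$ of an i.i.d.\ sequence $(\xi_n)_{n\le0}$. Given a finite-valued $Y$ measurable for $\F_0\subset\G_0$ and $\delta>0$, since $\sigma(\xi_{n_0+1},\ldots,\xi_0)$ increases to $\G_0$ as $n_0\to-\infty$, for $n_0$ small enough there is a $\sigma(\xi_{n_0+1},\ldots,\xi_0)$-measurable $\widetilde Y$ with $\P[Y\ne\widetilde Y]<\delta/2$. On a product space I would then realise a single shared copy of the block $(\xi_n)_{n_0<n\le0}$ glued to two independent copies $(\xi'_n)_{n\le n_0}$ and $(\xi''_n)_{n\le n_0}$ of the distant past; the two i.i.d.\ filtrations $\G',\G''$ so obtained are co-immersed in the product-type filtration $\G'\vee\G''$, hence so are the corresponding copies $\F',\F''$ of $\F$; the $\sigma$-fields $\F'_{n_0}\subset\G'_{n_0}$ and $\F''_{n_0}\subset\G''_{n_0}$ are independent; and $\P[Y'\ne Y'']\le\P[Y'\ne\widetilde Y']+\P[\widetilde Y''\ne Y'']<\delta$ because $\widetilde Y'=\widetilde Y''$ depends on the shared block alone. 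All four clauses of the I-cosiness definition are thus verified.

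For the hard implication, I-cosy $\Rightarrow$ standard, the plan is to manufacture an immersion of $\F$ into a product-type filtration, equivalently a generating sequence of innovations for $\F$. For each $k$, applying the I-cosiness criterion to $Y=(Y^{(1)},\ldots,Y^{(k)})$ with $\delta=2^{-k}$ produces a co-immersed pair of copies $\F',\F''$ of $\F$ that become independent at some level $n_0(k)$, with $\P[Y'\ne Y'']<2^{-k}$. Co-immersion together with this independence says that, above level $n_0(k)$, all of $\F_0$ can be recovered up to error $2^{-k}$ from a source independent of the full past; stitching these couplings together along $k\to\infty$ and passing to a limit of joinings should produce the desired immersion into a filtration of product type.

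The main obstacle, and the reason we do not reproduce this argument, is exactly that limiting step: one must topologise the set of finite-level joinings so that the family just constructed is relatively compact, extract a consistent limit along a subsequence, and check that the limit filtration is genuinely of product type rather than merely close to it at every finite horizon. This is the technical core of the \'Emery--Schachermayer theorem~\cite{emery2001vershik} and of Vershik's original standardness criterion; in Section~\ref{S3-split} we simply invoke the stated corollary.
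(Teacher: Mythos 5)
You have correctly diagnosed the situation: the paper does not prove this statement at all. It is imported wholesale as Corollary~5 of \'Emery and Schachermayer~\cite{emery2001vershik}, and the text even stresses that only the easy implication (standard implies I-cosy, used in contrapositive to conclude non-standardness from failure of I-cosiness) is ever invoked. There is therefore no in-paper proof to compare your sketch against.

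On the sketch itself: the easy direction you outline (approximate $Y$ by a cylinder variable $\widetilde Y$ using martingale convergence along $\sigma(\xi_{n_0+1},\ldots,\xi_0)\uparrow\G_0$; share the recent innovation block; independently resample the distant past) is precisely the \'Emery--Schachermayer construction and is sound. One point you should make explicit rather than leave implicit: your coupling delivers co-immersion of $\F'$ and $\F''$ in $\G'\vee\G''$, whereas the definition used in the paper asks that both be immersed in $\F'\vee\F''$. This does follow --- a filtration immersed in a larger one is immersed in any intermediate one, by the tower property applied to its martingales --- but a reader should not have to supply it. As for the hard direction, you stop, candidly, exactly where the genuine work begins: topologising the set of finite-level joinings, extracting a consistent limit, and checking that the limit is a true immersion into a product-type filtration rather than merely a filtration that looks product-type at each finite horizon. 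That limiting argument is the technical heart of~\cite{emery2001vershik} and of Vershik's original standardness criterion; leaving it as a citation is exactly what the paper does, so your proposal is an honest gloss rather than a self-contained proof.
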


To prove the non standardness of $\mathcal{F}^{(X,V)}$, it is therefore sufficient to show that $\mathcal{F}^{(X,V)}$ does not satisfies the I-cosiness criterion.
The tools of the proof are introduced just below.

Let $n_0$ be a negative integer which will be fixed later.

\begin{definition}[Definition of $\mathrm{Aut}_n$]
The intervals of integers $\{1,\ldots,\ell_k\}$,\\ $\{\ell_k+1,\ldots,2 \ell_k\},\ldots,\{\ell_n-\ell_k+1,\ldots,\ell_n\}$ are called blocks of $\{1,\ldots,\ell_n\}$ of length $\ell_k$.
Every permutation of the $\ell_{n}/\ell_{n_{0}}$ blocks of length $\ell_{n_{0}}$ which induces for every $k \in \{n,\ldots,n_{0}\}$ a bijection between the blocks of length
$\ell_{k}$ is called an automorphism of $\{1,\ldots,\ell_{n}\}$ adapted to $\{r_{n+1},...,r_{n_0}\}$. One denotes by $\rm{Aut}_n$ the set of those permutations.
\end{definition}

One can enumerate the automorphisms adapted to $(r_{n})_{n \le 0}$, by induction, as follows.
By definition, an automorphism $a$ in $\mathrm{Aut}_{n-1}$ is built from a permutation $\sigma$ of $\{1,\ldots,r_n\}$ and from $r_n$ automorphisms $(a_k)_{1\le k\le r_n}$ in $\mathrm{Aut}_{n}$. One gets $a$ from $\sigma$ and $(a_k)_{1\le k\le r_n}$ by setting, for every $1\le j\le r_n$ and every $1\le k\le \ell_n$,
$$a((j-1)\ell_{n}+k) = (\sigma(j)-1)\ell_{n}+a_{j}(k).
$$
Therefore $\# (\mathrm{Aut}_{n-1})= \#(\mathfrak{S}_{r_{n}})(\#(\mathrm{Aut}_{n}))^{r_{n}}=r_{n}!(\#(\mathrm{Aut}_{n}))^{r_{n}}$.
By induction
$$\#(\mathrm{Aut}_{n}) = \prod^{n_{0}}_{k=n+1}(r_{k}!)^{r_{n+1}\ldots r_{k-1}} = \prod^{n_{0}}_{k=n+1}(r_{k}!)^{\ell_{n}/\ell_{k-1}}.$$
Note that
$$\#(\mathrm{Aut}_{n})= \exp(\ell_{n}S_{n})\ \text{ where } \displaystyle \ S_{n}=\sum_{k=n+1}^{n_{0}}\frac{\ln r_k!}{\ell_{k-1}}.
$$
We now define a semi-metrics based on the Hamming distance.

\begin{definition}[Semi-metrics $e_n$ on $A^{\ell_{n}}$]
Recall that the Hamming distance on $A^{\ell_{n}}$ is defined by
 $$d^{H}_{n}(x,x')=\#\{ k \in \{1,\ldots,\ell_{n}\} ~:~  x(k) \ne x'(k) \}.
 $$
One defines an action of the group $\mathrm{Aut}_{n}$ on $A^{\ell_{n}}$,
seen as $(A^{\ell_{n_0}})^{\ell_n/\ell_{n_0}}$,
by
$$
a \cdot x = x \circ a^{-1},
$$
and a semi-metrics $e_{n}$ on $A^{\ell_{n}}$, by
$$
e_{n}(x,x')=\frac{1}{\ell_{n}} \min\{d^{H}_{n} (a\cdot x, x')\,;\,a \in \mathrm{Aut}_{n}\}.
$$
\end{definition}

The quantity $e_{n}(x,x')$ is the smallest proportion of letters which are different between the words $x'$ and $x \circ a^{-1}$ as $a$ goes through $\mathrm{Aut}_{n}$.
Our next result is a recursion relation, useful to compute $e_n$.

\begin{lemme}\label{lem3.1} For every $n\le0$,
$x=(w_{1},\ldots,w_{r_{n}})$ and $x'=(w'_{1},\ldots,w'_{r_{n}})$
where $w_{i}$ and $w'_j$ belong to $A^{\ell_{n}}$,
$$ e_{n-1}(x,x')=\min_{\sigma \in \mathfrak{S}_{r_{n-1}}} \left(\frac{1}{r_n} \sum_{j=1}^{r_{n}} e_{n}(w_{j},w'_{\sigma(j)})\right).$$
\end{lemme}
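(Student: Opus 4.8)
The equality is a statement about minimizing the normalized Hamming distance over the group $\mathrm{Aut}_{n-1}$, so the natural strategy is to exploit the inductive structure of $\mathrm{Aut}_{n-1}$ recalled just above: every $a \in \mathrm{Aut}_{n-1}$ is determined by a permutation $\sigma \in \mathfrak{S}_{r_n}$ of the $r_n$ big blocks of length $\ell_n$, together with a family $(a_j)_{1 \le j \le r_n}$ of automorphisms in $\mathrm{Aut}_n$, acting inside each block. First I would unwind the definition of $e_{n-1}$ to rewrite the Hamming distance $d^H_{n-1}(a \cdot x, x')$ as a sum over the $r_n$ blocks. Using the explicit formula $a((j-1)\ell_n + k) = (\sigma(j)-1)\ell_n + a_j(k)$, the letters of $a \cdot x = x \circ a^{-1}$ restricted to the block that ends up in position $\sigma(j)$ are exactly the letters of $a_j \cdot w_j$; hence
$$
d^H_{n-1}(a \cdot x, x') = \sum_{j=1}^{r_n} d^H_n(a_j \cdot w_j, w'_{\sigma(j)}).
$$
Dividing by $\ell_{n-1} = r_n \ell_n$ gives $e_{n-1}$-type quantities on the right.

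\textbf{Key steps.} The second step is to perform the minimization in two stages. For a fixed $\sigma$, minimizing over the independent choices of $a_1,\ldots,a_{r_n} \in \mathrm{Aut}_n$ splits the sum term by term, so
$$
\min_{(a_j)} \frac{1}{\ell_{n-1}} \sum_{j=1}^{r_n} d^H_n(a_j \cdot w_j, w'_{\sigma(j)}) = \frac{1}{r_n} \sum_{j=1}^{r_n} \min_{a_j \in \mathrm{Aut}_n} \frac{1}{\ell_n} d^H_n(a_j \cdot w_j, w'_{\sigma(j)}) = \frac{1}{r_n} \sum_{j=1}^{r_n} e_n(w_j, w'_{\sigma(j)}),
$$
using the definition of $e_n$. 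Then I would minimize over $\sigma$, noting that the blocks of length $\ell_n$ inside $\{1,\ldots,\ell_{n-1}\}$ are exactly permuted by $\mathfrak{S}_{r_n}$ when one runs through $\mathrm{Aut}_{n-1}$ (this is the content of the block-structure description, and $\mathfrak{S}_{r_{n-1}}$ in the statement should of course be read as $\mathfrak{S}_{r_n}$, matching the index convention $\ell_{n-1} = r_n \ell_n$). Taking $\min_\sigma$ on both sides then yields the claimed recursion.

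\textbf{Main obstacle.} The only genuinely delicate point is the bookkeeping in the first step: one must check carefully that the action $a \cdot x = x \circ a^{-1}$, when $a$ is built from $\sigma$ and $(a_j)$ by the stated formula, really does send the $j$-th sub-word $w_j$ of $x$ to the $\sigma(j)$-th sub-word position, with its internal letters rearranged precisely by $a_j$ (and not by $a_{\sigma(j)}$ or $a_{\sigma^{-1}(j)}$, nor with $a_j$ replaced by $a_j^{-1}$). Getting this index matching right — i.e.\ verifying that the restriction of $a \cdot x$ to positions $\{(\sigma(j)-1)\ell_n+1,\ldots,\sigma(j)\ell_n\}$ is $a_j \cdot w_j$ — is what makes the sum decompose cleanly; once it is established, the rest is the routine observation that an infimum of a sum of independent variables is the sum of the infima. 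I would spell out the index computation explicitly and then state the two-stage minimization as above.
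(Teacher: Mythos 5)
Your proof is correct and follows essentially the same route as the paper: decompose $a\in\mathrm{Aut}_{n-1}$ into $\sigma\in\mathfrak{S}_{r_n}$ and $(a_j)_{1\le j\le r_n}\in(\mathrm{Aut}_n)^{r_n}$, use additivity of the Hamming distance over the $r_n$ blocks to get $d^H_{n-1}(a\cdot x,x')=\sum_j d^H_n(a_j\cdot w_j,w'_{\sigma(j)})$, and then minimize in two stages. You also correctly flag the typo $\mathfrak{S}_{r_{n-1}}$ for $\mathfrak{S}_{r_n}$ in the statement; the paper's own proof silently writes $\mathfrak{S}_{r_n}$ in its final displayed equality.
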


\begin{proof}[of lemma~\ref{lem3.1}]

Using the decomposition of every automorphism $a$ of $\mathrm{Aut}_{n-1}$ into a permutation $\sigma$ of $\{1,\ldots,r_n\}$ and $r_n$ automorphisms $a_k$, $1\le k\le r_n$ in $\mathrm{Aut}_{n}$, and the additivity of the restricted Hamming distance,
one sees that $\ell_{n-1} e_{n-1}(x,x')$ is the minimum over these $\sigma$ and $a_k$ of
the sums
$$
\sum_{j=1}^{r_n}d_{n}^{H}(a_{j} \cdot w_{j}, w'_{\sigma(j)}),
$$
hence
$$
\ell_{n-1} e_{n-1}(x,x')
=
\min_{\sigma \in \mathfrak{S}_{r_{n}}} \left(\ell_{n}\sum_{j=1}^{r_n}e_{n}(w_{j},w'_{\sigma(j)}) \right).
$$
\hfill $\square$
\end{proof}


To prove that the filtration $\mathcal{F}^{(X,V)}$ is non standard under the assumption $\Delta$, by denying the I-cosiness criterion, one considers $X'$ and $X''$ two copies of $(X_n)_{n \le 0}$ such that:
\begin{itemize}
\item The associated filtrations $\mathcal{F}'=\mathcal{F}^{(X')}$ and $\mathcal{F}''=\mathcal{F}^{(X'')}$ are co-immersed;
\item There exists an integer $m<n_0$ such that $(X'_k)_{k\le m}$ and $(X''_k)_{k\le m}$ are independent.
\end{itemize}
Our proof of the non standardness of $\F^{(X,V)}$ includes three steps:
\begin{itemize}
\item We prove the inequality $\P[X'_{n_0} \ne X''_{n_0}] \ge \E[e_n(X'_n,X''_n)]$.
\item We bound below $\E [e_{n}(X'_n,X''_n)]$ when $X'_n$ and $X''_n$ are independent.
\item We negate the I-cosiness criterion.
\end{itemize}

\subsection{Proof of the inequality $\P[X'_{n_0} \ne X''_{n_0}] \ge \E[e_n(X'_n,X''_n)]$ for $n\le n_0$}\label{S3.2-split}


For $n\le n_0$ one denotes by
$$
M_n = \P[X'_{n_0} \ne X''_{n_0}|{\cal F'}_n \vee {\cal F''}_n],
\qquad
L_n = e_n(X'_n,X''_n).
$$
By construction, $(M_n)_{n \le n_0}$ is a martingale. Loosely speaking, this martingale measures the influence of the past before time $n$ on the word at time $n_0$. The key step is to prove that $(M_n)_{n \le n_0}$ is bounded below by $(L_n)_{n \le n_0}$.

Let us prove that $(L_n)_{n \le n_0}$ is a sub-martingale in the filtration ${\cal F'} \vee {\cal F''}$.
Let $n \le n_0$. Using the co-immersion of the filtrations ${\cal F'}$ and ${\cal
 F''}$, the conditional law of $X'_n$ given ${\cal F'}_{n-1} \vee
{\cal F''}_{n-1}$ is uniform on the $r_n$ sub-words of $X'_{n-1}$ of length $\ell_n$:
$${\cal L}(X'_n|{\cal F'}_{n-1} \vee
{\cal F''}_{n-1}) = \frac{1}{r_n} \sum_{i=1}^{r_n}\ \delta_{f_n(X'_{n-1},i)},$$
where $f_n(x,v)$ denotes the $v$-th sub-word (of length $\ell_n$) of $x$, for $x\in A^{\ell_{n-1}}$ and $1\le v \le r_n$ (as in the proof of theorem~\ref{theoB}). Furthermore,
$${\cal L}(X''_n|{\cal F'}_{n-1} \vee {\cal F''}_{n-1}) = \frac{1}{r_n} \sum_{j=1}^{r_n}\ \delta_{f_n(X''_{n-1},j)}.$$
Therefore, the conditional law of $(X'_n,X''_n)$ given ${\cal
 F'}_{n-1} \vee {\cal F''}_{n-1}$ can be written as follows
$${\cal L}((X'_n,X''_n)|{\cal F'}_{n-1} \vee
{\cal F''}_{n-1}) = \frac{1}{r_n} \sum_{1 \le i,j \le r_n} C_{i,j}\ \delta_{\big(f_n(X'_{n-1},i),f_n(X''_{n-1},j) \big)},$$
where $(C_{i,j})_{1 \le i,j \le r_n}$ is a bistochastic matrix
 measurable for $\F'_{n-1} \vee \F''_{n-1}$. In particular,
$$\E [L_n|{\cal F'}_{n-1} \vee
{\cal F''}_{n-1}] = \frac{1}{r_n} \sum_{1 \le i,j \le r_n} C_{i,j} \ e_n \big(f_n(X'_{n-1},i),f_n(X''_{n-1},j) \big).$$
This quantity is the image of the matrix $(C_{i,j})_{1 \le i,j \le
 r_n}$ by a linear form. Since bistochastic matrices belong to the convex hull of the permutation matrices, one gets
$$
\E [L_n|{\cal F'}_{n-1} \vee {\cal F''}_{n-1}] \ge \frac{1}{r_n} \inf_{\sigma \in \mathfrak{S}_{r_n}} \sum_{i=1}^{r_n}  e_n \big(f_n(X'_{n-1},i),f_n(X''_{n-1},\sigma(i)) \big),
$$
hence
$$
\E [L_n|{\cal F'}_{n-1} \vee {\cal F''}_{n-1}] \ge e_{n-1} (X'_{n-1},X''_{n-1}) = L_{n-1},
$$
thanks to the recursion relation verified by the semi-metrics
$e_n$. This shows that $(L_n)_{n \le n_0}$ is a sub-martingale
in the filtration ${\cal F'} \vee {\cal F''}$.

Since $M_{n_0} = \mathbf{1}_{\{X'_{n_0} \ne X''_{n_0}\}} \ge L_{n_0}$, one gets, for every $n \le n_0$
$$
M_n = \E [M_{n_0}|{\cal F'}_n \vee {\cal F''}_n] \ge \E [L_{n_0}|{\cal F'}_n \vee {\cal F''}_n] \ge L_n,
$$
yielding the inequality $\P[X'_{n_0} \ne X''_{n_0}] \ge \E [e_n(X'_n,X''_n)]$ by taking the expectations.

This inequality is going to be used to prove that the probability
$\P[X'_{n_0} \ne X''_{n_0}]$ can not be made as small as one wishes if the processes $X'$ and $X''$ are  independent until a given time $n$.

\subsection{Bounding below $\E [e_{n}(X'_n,X''_n)]$ when $X'_n$ and $X''_n$ are independent}
\label{S3.3-split}
The purpose of this subsection is to prove the next inequality:
\begin{lemme} \label{SL.11.4.1}\label{lem3.2}
Let $(X'_{k})_{k\le 0}$ and $(X''_{k})_{k\le 0}$ be two copies of the split-word process that are independent until time $n$.
Then, for every $\alpha>0$ and $n \le n_0$,
$$ \P \left[ e_{n}(X'_{n},X''_{n}) \le 1-N^{-1}-\alpha \right] \le \exp\left(\ell_{n}(S_{n}-2\alpha^2)\right), \text{ where } S_{n}=\sum_{k=n+1}^{n_{0}}\frac{\ln(r_{k}!)}{\ell_{k-1}} $$
\end{lemme}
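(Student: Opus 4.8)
The plan is to reduce the claimed deviation bound to a counting estimate on orbits of $\mathrm{Aut}_n$ together with a concentration inequality for the Hamming distance between two independent uniform words. First I would observe that, since $X'_n$ and $X''_n$ are independent until time $n$, conditionally on $X'_n$ the word $X''_n$ is uniform on $A^{\ell_n}$ (this uses that each $X_k$ is marginally uniform and that the processes are independent up to time $n$). Fix $x' \in A^{\ell_n}$. The event $\{e_n(X'_n,X''_n) \le 1-N^{-1}-\alpha\}$ means there exists $a \in \mathrm{Aut}_n$ with $d^H_n(a\cdot x',X''_n) \le (1-N^{-1}-\alpha)\ell_n$, equivalently the number of agreements between $a\cdot x'$ and $X''_n$ is at least $(N^{-1}+\alpha)\ell_n$. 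So the conditional probability is at most $\sum_{a\in\mathrm{Aut}_n}\P[\,\#\{k : (a\cdot x')(k)=X''_n(k)\}\ge (N^{-1}+\alpha)\ell_n\,]$.

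Next I would handle each summand. For fixed $a$ and $x'$, the word $a\cdot x'$ is a fixed element of $A^{\ell_n}$, and $X''_n$ is uniform, so each coordinate matches independently with probability $1/N$; hence $\#\{k:(a\cdot x')(k)=X''_n(k)\}$ is $\mathrm{Bin}(\ell_n,1/N)$, with mean $\ell_n/N$. By Hoeffding's inequality, $\P[\,\mathrm{Bin}(\ell_n,1/N)\ge (N^{-1}+\alpha)\ell_n\,]\le \exp(-2\alpha^2\ell_n)$. Since this bound is uniform in $a$ and in $x'$, integrating over $x'$ and summing over the $\#(\mathrm{Aut}_n)$ automorphisms gives
$$
\P\left[e_n(X'_n,X''_n)\le 1-N^{-1}-\alpha\right]\le \#(\mathrm{Aut}_n)\,e^{-2\alpha^2\ell_n}.
$$
Finally I would substitute the identity $\#(\mathrm{Aut}_n)=\exp(\ell_n S_n)$ with $S_n=\sum_{k=n+1}^{n_0}\ln(r_k!)/\ell_{k-1}$, established just before the lemma, to obtain $\exp(\ell_n(S_n-2\alpha^2))$, which is exactly the claimed bound.

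The only real subtlety is making the conditioning argument clean: one must justify that the distribution of $X''_n$ given $X'_n$ really is uniform on $A^{\ell_n}$ (so that the coordinatewise matching probabilities are $1/N$ and independent), which follows from the independence of the two processes up to time $n\ge n$ together with the uniform marginal of $X''_n$; then the union bound over $\mathrm{Aut}_n$ and the Hoeffding estimate are routine. I expect the main obstacle is purely bookkeeping — correctly relating $e_n \le 1-N^{-1}-\alpha$ to a \emph{lower} bound on the number of matching coordinates (rather than an upper bound on mismatches), and then getting the direction of Hoeffding's inequality right — rather than anything structurally deep.
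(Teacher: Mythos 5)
Your proposal is correct and follows essentially the same route as the paper: union bound over $\mathrm{Aut}_n$, Hoeffding applied to the coordinatewise (dis)agreement count between two independent uniform words, and the identity $\#(\mathrm{Aut}_n)=\exp(\ell_n S_n)$. The only cosmetic difference is that you condition on $X'_n$ and count agreements (upper tail), whereas the paper works unconditionally, noting that $a\cdot X''_n$ is independent of $X'_n$ with the right law, and bounds the lower tail of the disagreement proportion; these are equivalent.
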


The proof is based on Hoeffding's large deviations inequality, see \cite{shiryaev1996png} for a proof of this inequality.

\begin{lemme}[Hoeffding] \label{Hoeffding} \label{lem3.3}
Let $q$ in $]0,1[$ and $(\epsilon_k)_{1\le k\le n}$ denote $n$ independent Bernoulli random variables of parameter $q$, and $Z_{n}$ their average. Then, for every real $\alpha > 0$,
$$
\P \left[ Z_{n} \le q-\alpha\right] \le \exp(-2n\alpha^2).
$$
\end{lemme}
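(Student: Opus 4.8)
The plan is to prove this one-sided Hoeffding bound by the classical Cramér--Chernoff exponential-moment method: after an exponential Markov inequality the problem reduces to bounding the moment generating function of each (centered, bounded) summand, and the sharp bound on that moment generating function produces exactly the constant $2$ in $\exp(-2n\alpha^2)$ upon optimizing the free parameter.

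First I would recenter: set $Y_k=q-\epsilon_k$, so that the $Y_k$ are i.i.d., $\E[Y_k]=0$, and $Y_k\in[q-1,q]$, an interval of length $1$. Since $\{Z_n\le q-\alpha\}=\{\sum_{k=1}^n Y_k\ge n\alpha\}$, for every $t>0$ Markov's inequality applied to the nonnegative variable $e^{t\sum_k Y_k}$ together with independence gives
$$
\P[Z_n\le q-\alpha]\le e^{-tn\alpha}\bigl(\E[e^{tY_1}]\bigr)^{n}.
$$
The heart of the argument is then the estimate $\E[e^{tY_1}]\le e^{t^2/8}$, valid for all real $t$.

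Second I would prove this moment generating function bound (Hoeffding's lemma) for any centered random variable $Y$ with values in an interval $[a,b]$: $\E[e^{tY}]\le e^{t^2(b-a)^2/8}$. By convexity of $x\mapsto e^{tx}$, for $x\in[a,b]$ one has $e^{tx}\le\frac{b-x}{b-a}e^{ta}+\frac{x-a}{b-a}e^{tb}$; taking expectations and using $\E[Y]=0$ turns the right-hand side into $e^{L(u)}$, where $u=t(b-a)$, $\theta=-a/(b-a)\in[0,1]$ (note $a\le0\le b$ because $0=\E[Y]\in[a,b]$), and $L(u)=-\theta u+\ln\!\bigl(1-\theta+\theta e^{u}\bigr)$. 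A direct computation gives $L(0)=L'(0)=0$ and $L''(u)=\rho(1-\rho)$ with $\rho=\theta e^{u}/(1-\theta+\theta e^{u})\in[0,1]$, hence $L''\le\tfrac14$; Taylor's formula with Lagrange remainder then yields $L(u)\le u^2/8$, i.e. the claimed bound. Specializing to $[a,b]=[q-1,q]$, for which $b-a=1$, gives $\E[e^{tY_1}]\le e^{t^2/8}$.

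Finally I would combine the two displays:
$$
\P[Z_n\le q-\alpha]\le\exp\!\bigl(n(t^2/8-t\alpha)\bigr)\qquad\text{for every }t>0,
$$
and minimize the exponent: $t\mapsto t^2/8-t\alpha$ attains its minimum at $t=4\alpha>0$, with value $-2\alpha^2$, giving $\P[Z_n\le q-\alpha]\le e^{-2n\alpha^2}$. The only real obstacle is Hoeffding's lemma, and within it the sharp constant: the bound $L''(u)=\rho(1-\rho)\le\tfrac14$ is exactly what is needed to make the final exponent $2n\alpha^2$ rather than a weaker multiple of $n\alpha^2$; the surrounding Chernoff bound and the optimization are entirely routine.
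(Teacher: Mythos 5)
Your proof is correct and complete: the Chernoff exponential-moment bound, Hoeffding's lemma on the moment generating function with the sharp constant $L''\le\tfrac14$, and the optimization at $t=4\alpha$ all check out. The paper itself does not prove this lemma at all --- it simply cites Shiryaev's \emph{Probability} for it --- so your argument supplies the standard classical proof of a result the paper treats as a black box.
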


\begin{proof}[of lemma~\ref{lem3.2}]
One applies lemma~\ref{Hoeffding} to the variables
$$\epsilon_{i}= \mathbf{1}_{\{X'_{n}(i) \ne
X''_{n}(i)\}},\qquad 1\le i \le \ell_{n}.
$$
Since $X'_n$ and $X''_n$ are independent and uniform on $A^{\ell_n}$, the $\epsilon_i$ are independent and Bernoulli with parameter $q_N=1-1/N$. Thus, denoting by $d^H_n$ the Hamming distance on $A^{\ell_n}$,
$$
\P \left[ \frac{1}{\ell_{n}} d^H_{n} (X'_{n},X''_{n}) \le q_N-\alpha \right] \le \exp(-2 \ell_{n} \alpha^2).
$$
For every $a$ in $\mathrm{Aut}_{n}$, $a \cdot X''$ has the same law as $X''$ and is independent of $X'$. Since, by definition $\displaystyle e_{n}(X'_{n},X''_{n})=\min\{d^H_{n}(X'_{n},a \cdot X''_{n})\,;\,a \in \mathrm{Aut}_{n}\}$, one gets
$$
 \left[ e_{n}(X'_{n},X''_{n}) \le q_N-\alpha \right]=\left[ \exists a \in \mathrm{Aut}_{n},~ \frac{1}{\ell_{n}} d^H_{n}(X'_{n},a \cdot X''_{n}) \le q_N-\alpha \right],
$$
hence
 \begin{eqnarray*}
\P \left[ e_{n}(X'_{n},X''_{n}) \le q_N-\alpha \right] &\le& \sum_{{a \in \mathrm{Aut}_{n}}} \P \left[ \frac{1}{\ell_{n}} d^H_{n}(X'_{n},a \cdot X''_{n}) \le q_N-\alpha \right] \\
&=& \sharp (\mathrm{Aut}_{n}) ~\P \left[\frac{1}{\ell_{n}}  d^H_{n}(X'_{n},X''_{n}) \le q_N-\alpha \right] \\
&\le& \exp(\ell_{n} S_{n}) \exp(-2\ell_{n}\alpha^2),
\end{eqnarray*}
since $\sharp (\mathrm{Aut}_{n})=\exp(\ell_{n} S_{n})$.
\hfill $\square$
\end{proof}

\paragraph{Choice of the integer $n_0$}
Under assumption $\Delta$, by remark 1 after theorem~\ref{theoC}, one can choose $n_0 \le 0$ such that
$$ S_{-\infty}<2 \left(1-1/N \right)^2, \text{ where } S_{-\infty}=\sum_{k=-\infty}^{n_0} \frac{\ln(r_{k}!)}{\ell_{k-1}}.
$$
Once $n_0$ is fixed, choose a real $\alpha$ such that
$\displaystyle \sqrt{S_{-\infty}/2}<\alpha<1-1/N.$

Since
$S_n-2\alpha^2\le S_{-\infty}-2\alpha^2<0$ and $\ell_{n} \ge \ell_{n_{0}}$ for every $n \le n_0$, lemma~\ref{SL.11.4.1} yields
\begin{eqnarray*}
\P \left[ e_{n}(X'_{n},X''_{n}) \le 1-1/N-\alpha \right] &\le& \exp\left(\ell_{n}(S_n-2\alpha^2)\right) \\
&\le& \exp\left(\ell_{n_0}(S_{-\infty}-2\alpha^2)\right)=\beta,
\end{eqnarray*}
with $\beta<1$. Hence, $ \E [e_{n}(X'_{n},X''_{n})] \ge (1-\beta)\left(1-1/N-\alpha\right)$, which is positive.

\subsection{Negation of the I-cosiness criterion}\label{S3.4-split}
If $(X',V')$ and $(X'',V'')$ are two copies of the split-word processus whose filtrations are co-immersed and independent until time $n\le n_0$,
$$
\P[X'_{n_0} \ne X''_{n_0}]\ge \E [e_{n}(X'_{n},X''_{n})] \ge (1-\beta)(1-1/N-\alpha),
$$
and this lower bound is positive,
contradicting the I-cosiness criterion. Thus the filtration of the split-word process is non standard. $\hfill \square$


\end{document}